\newcommand{\rd}{\mathrm{d}}
\newcommand{\abs}[1]{\left|#1\right|}
\newcommand{\vertiii}[1]{{\left\vert\kern-0.25ex\left\vert\kern-0.25ex\left\vert #1
    \right\vert\kern-0.25ex\right\vert\kern-0.25ex\right\vert}}
\theoremstyle{plain}
\newcommand*{\rom}[1]{\expandafter\@slowromancap\romannumeral #1@}
\newtheorem*{thm*}{Theorem}
\subjclass{}%
\keywords{}%
\date{\today}%
\dedicatory{}%
\title{Sign Changes Along Geodesics of Modular Forms}
\author{Dubi Kelmer}
\thanks{Kelmer is partially supported by NSF CAREER grant DMS-1651563.}
\email{kelmer@bc.edu}
\address{Department of Mathematics, Boston College, Boston, Massachusetts, United States}
\author{Alex Kontorovich}
\thanks{Kontorovich is partially supported by NSF grant DMS-2302641, BSF grant 2020119, and a Simons Fellowship.}
\email{alex.kontorovich@rutgers.edu}
\address{Department of Mathematics, Rutgers University, New Brunswick, New Jersey, United States}
\author{Christopher Lutsko}
\email{clutsko@uh.edu}
\address{Department of Mathematics, University of Houston, Houston, Texas, United States}
\date{\today}
\begin{document}

\maketitle
\begin{abstract}
\noindent    
Given a compact segment, $\beta$, of a cuspidal geodesic on the modular surface, we study the number of sign changes of cusp forms and Eisenstein series along $\beta$.
Recent work of Ki \cite{Ki2023} a sharp lower bound the number of sign changes was given assuming sup norm bounds for these forms as well as the  Lindel\"of hypothesis for the corresponding $L$-funcitons. The main result of this work is to obtain the same result for Eisenstein series without the Lindel\"of hypothesis.

\end{abstract}

\section{Introduction}

Let $\Gamma < \SL_2(\R)$ 
be a discrete, cofinite group
acting on the upper half-plane $\bH$ by fractional linear transformations. 
Given a real-valued automorphic function $f : \Gamma \bk \bH \to \R$, we denote by $Z_f$ its zero set, which separates the space into connected nodal domains. A key question in the analysis of $f$ is to consider the number of nodal domains.  Of particular interest, with applications to quantum chaos, is to study the number of nodal domains of 
eigenfunctions of the hyperbolic Laplace-Beltrami operator, 
as the eigenvalue goes to infinity.  Henceforth we work specifically with the modular group $\Gamma : = \SL_2(\Z)$; the proofs below can be generalized to congruence subgroups, as long as they include reflection symmetries. 

Recall the spectral decomposition of $L^2(\Gamma \bk \bH)$ into cusp forms and Eisenstein series. 
The Eisenstein series for the modular group $\Gamma
$ is given by
\begin{align*}
    E(z,s) : = \sum_{\gamma \in \Gamma_\infty \bk \Gamma} \Im(\gamma z)^s,
\end{align*}
where $\G_\infty$ is the stabilizer of $\infty$ in $\G$.
This series converges absolutely for $\Re(s)>1$, and has meromorphic continuation for all $s\in \C$. For $s=\frac12+it$, the function 
\begin{align*}
    E_t(z):=E(z,\frac12+it)
\end{align*} 
is an eigenfunction of the Laplacian (as well as all Hecke operators), and has Laplace eigenvalue $\gl=\frac14+t^2$. We include an image of the Eisenstein series in Figure \ref{fig:Eis}.

Moreover, a \emph{Maass cusp form} is a function $\phi: \bH \to \R$ satisfying
\begin{enumerate}[label = (\roman*)]
    \item  $\Delta \phi +\lambda \phi=0, \qquad \lambda = \lambda_\phi >0$,
    \item $\phi(\gamma z)=\phi(z), \qquad \gamma \in \Gamma$,
    \item and $\phi\in L^2 (\Gamma \bk \bH)$ with $L^2$ norm $1$. 
\end{enumerate}
Given such a cusp form $\phi$, we write its eigenvalue as $\lambda_\phi =\frac{1}{4} +  t_\phi^2$.

  \begin{figure}[ht!]
  \begin{center}    
    \vstretch{0.7}{\includegraphics[width=0.5\textwidth]{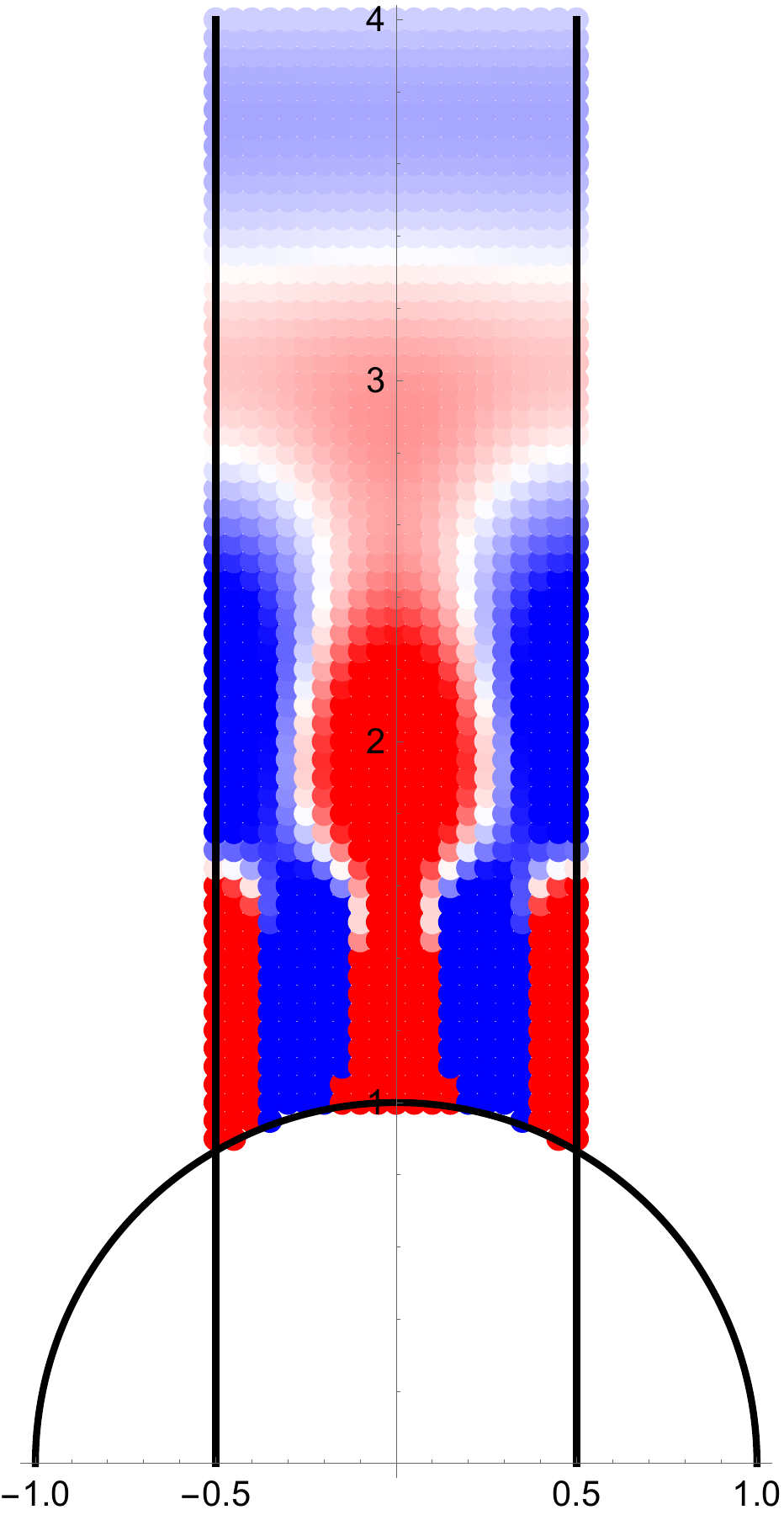}}
  \end{center}  
  \caption{An example of the Eisenstein series with $t=14$. The red dots denote positive regions while the blue dots denote negative regions. Note the  sign change visible along the imaginary axis.}
\label{fig:Eis}
\end{figure}

A heuristic argument of Bogomolny and Schmit \cite{BogomolnySchmit2002} gives a very precise prediction for the asymptotic number $N^{\Omega}(\phi)$ of nodal domains of a Maass cusp form $\phi$ in a compact domain $\Omega\subseteq \Gamma \bk \bH$, namely that $N^\Omega(\phi)$  grows like a constant times $\lambda_\phi$, as $\lambda_\phi \to \infty$. While their prediction is supported by numerics, it seems currently out of reach, and even the weaker claim that $N^{\Omega}(\phi)\to\infty$ as $\lambda_\phi\to \infty$ is 
not currently known unconditionally (and may not be true for general surfaces, see \cite[p.3]{GhoshReznikovSarnak2013}).


The space $\Gamma\bk \bH$ has an orientation reversing isometry, $\sigma(x+iy)=-x+iy$.
We say that a nodal domain is inert if it is preserved by $\sigma$, and split if it is paired with another domain. We denote by $N_{\rm in}(f)$ and $N_{\rm sp}(f)$ the number of inert and split domains. 
Let $\delta\subset \cF_\Gamma$ denote the set of fixed points of $\sigma$, which is naturally partitioned as 
$$\delta=\delta_1\cup\delta_2\cup \delta_3,$$ 
with $\delta_1=\{iy: y\geq 1\}, \delta_2=\{\frac12+iy:y\geq {\sqrt3\over2}\}$ and $\delta_3=\{x+iy: 0<x<\frac12, x^2+y^2=1\}$.
It was then observed in \cite{GhoshReznikovSarnak2013} that for an even cusp form (i.e., a cusp form satisfying $\phi(\sigma z)=\phi(z)$), one can bound $N_{\rm  in}(\phi)$ by counting the number of sign changes of $\phi$ along $\delta$, or more generally, along a non-empty compact segment $\beta\subseteq \delta$. Explicitly, given a segment $\beta\subseteq \delta$, let $K^\beta(\phi)$ denote the number of sign changes of $\phi$ along $\beta$, and $N_{\rm in}^\beta(\phi)$ the number of nodal domains intersecting $\beta$; then 
\begin{equation}\label{eq:Kbeta}
1+\frac12 K^\beta(\phi)\leq N^\beta_{\rm in}(\phi)\leq |Z_\phi\cap \beta|.   
\end{equation}
It is thus possible to reduce the problem of studying the number of (inert) nodal domains to studying the number of sign changes/zeros.
For this problem, 
\cite{GhoshReznikovSarnak2013} proved,
assuming the Lindel\"of hypothesis for the 
$L$-functions attached to $\phi$, 
that,
given a compact geodesic segment $\beta$ in $\delta_1$ or $\delta_2$,   
$$t_\phi^{\nu}\ll |Z_\phi\cap \beta| \ll t_\phi,$$
for any $\nu < 1/12$. (Note that
 the upper bound here is unconditional and follows from general complexification techniques \cite{TothZelditch2009}.) In addition, these techniques can be applied to give a similar, although still conditional, lower bound for the same problem on Eisenstein series. Following this Jang and Jung \cite{JangJung2018} used arithmetic quantum unique ergodicity, to prove qualitatively that the number of nodal domains goes to $\infty$ with the eigenvalue. Moreover, Jung and Young \cite{JungYoung2019} proved an unconditional but weaker lower bound for Eisenstein series with  $\nu < 1/51.$

Recently, Ki \cite[Theorem 1]{Ki2023} proved an essentially sharp (in the exponent) lower bound for both Maass forms and the Eisenstein series, conditional on both the Lindel\"of hypothesis for the associated $L$-function and a fourth moment bound along $\beta$. Explicitly, Ki shows that for any $\vep>0$,
 \begin{equation}\label{eq:KiThm1}
 |Z_f\cap \beta|\ \gg_\vep \ t_f^{1-\vep},   
 \end{equation}
 where $f$ is either a Maass form or the Eisenstein series
 (Ki's technique can also be applied to sign changes, $K^\beta(f)$). Our Theorem \ref{thm:cond Eis} recovers this sharp lower bound for Eisenstein series \emph{without} the assumption of the Lindel\"of hypothesis. 
Moreover, Theorem \ref{thm:cond} 
show similar results for cusp forms, conditioned on an $L^2$ estimate for $L$-functions (namely, Conjecture \ref{con:L2}). 

While we specialize to the modular surface, we can extend this work to congruence subgroups with reflection symmetries. In addition, we specialize our analysis to the central line $z=iy$, but
this
can also be
extended 
to any cuspidal geodesic, see Remark \ref{rmk:geod}. 

\subsection{Main results}

The main goal of
this paper is to prove the same bound as Ki's \eqref{eq:KiThm1} for the Eisenstein series, \emph{without} assuming the Lindel\"of hypothesis.

\begin{theorem}\label{thm:cond Eis}
    Let $\beta=i[a,b]$ be a compact segment of the imaginary line. Let $t_j\to\infty$ and
    suppose that there is some $p>2$   such that for all $\vep>0$,
    \begin{align} \label{Lp bound Eis}
        \left(\int_{a}^b |E_{t_j}(iy)|^p {\rd y\over y}\right)^{1/p} \ll_\vep t_j^\vep.
    \end{align}
    Then 
for any $\vep>0$,
    \begin{align}\label{K bound cond Eis}
        K^\beta(E_{t_j}) \gg_\vep
        t_j^{1-\vep}.
    \end{align}
\end{theorem}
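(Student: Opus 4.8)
The plan is to treat $E_{t}(iy)$ (writing $t=t_j$) as an essentially bandlimited function in the variable $u=\log y$, whose zero set on $\beta$ is forced to be dense by a resonance with its constant term. \emph{Step 1 (Fourier--Bessel reduction).} I would insert the Fourier expansion of $E(z,s)$ and restrict to $x=0$, writing $E_{t}(iy)=a_{0}(y)+R_{t}(y)$, where the constant term $a_{0}(y)=y^{1/2+it}+\varphi(\tfrac12+it)\,y^{1/2-it}$ equals, after the phase normalization making $E_t$ real (with $\varphi(\tfrac12+it)=e^{2i\theta_t}$), the function $2\sqrt{y}\cos(t\log y-\theta_t)$, and $R_{t}$ is the sum of the nonzero modes $\propto\sqrt{y}\,n^{it}\sigma_{-2it}(n)K_{it}(2\pi n y)/\xi(1+2it)$. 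Using the oscillatory asymptotics of $K_{it}$, each such term has phase of $\log$-frequency $\sqrt{t^{2}-(2\pi n y)^{2}}\in[0,t)$, strictly below the frequency $t$ of $a_0$; thus $E_{t}(iy)$ behaves like a real function of exponential type $\asymp t$ in $u=\log y$, with the constant term sitting at the very top of the band.

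\emph{Step 2 (size inputs).} Two quantitative facts drive the argument. First, a second moment computation (equivalently, restricted equidistribution of $|E_t|^2$ along $\beta$) gives $\int_a^b|E_t(iy)|^2\,\rd y/y\gg 1$; combined with the hypothesis \eqref{Lp bound Eis} and Hölder interpolation between $L^1$, $L^2$ and $L^p$ this yields $\int_a^b|E_t(iy)|\,\rd y/y\gg_\vep t^{-\vep}$. Second, and crucially, I would establish the \emph{resonance}
\begin{equation*}
\left|\int_a^b E_t(iy)\,y^{it}\,\frac{\rd y}{y}\right|\gg 1 .
\end{equation*}
Here the $y^{1/2+it}$ piece of $a_0$ contributes $\int_a^b y^{-1/2}\,\rd y\asymp 1$, while the remaining constant-term piece and every Bessel term are nonresonant: their phases have $\log$-frequency bounded away from $t$ (the Bessel frequencies lying in $[0,t)$), so by nonstationary phase, with the turning-point range $2\pi n y\approx t$ handled in the Airy regime, their total contribution is $O(t^{-\delta})$.

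\emph{Step 3 (from resonance to sign changes).} Let $K=K^\beta(E_t)$ and partition $\beta$ into the $K+1$ maximal subintervals $J_i$ on which $E_t(iy)$ has a constant sign. Then
\begin{equation*}
1\ll\left|\int_a^b E_t\,y^{it}\,\frac{\rd y}{y}\right|\le\sum_i\left|\int_{J_i}|E_t|\,y^{it}\,\frac{\rd y}{y}\right|,
\end{equation*}
and on each $J_i$ the oscillation of $y^{it}$ produces cancellation that is the more effective the longer $J_i$ is, i.e.\ the fewer sign changes there are. Quantifying this cancellation against the $L^p$ control of $E_t$ is the mechanism forcing $K$ to be large: the hypothesis $p>2$ enters through reverse Hölder, guaranteeing that the $L^2$ mass of $E_t$ is not concentrated but spread over a subset of $\beta$ of measure $\gg t^{-\vep}$, on which the frequency-$t$ oscillation then generates $\gg t^{1-\vep}$ sign changes. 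A convenient route is to run this estimate interval-by-interval after integrating by parts against the bounded antiderivative $\int^y u^{it}\,\rd u/u=O(1/t)$, controlling the resulting variation-type error by $\|E_t\|_{L^p(\rd y/y)}\ll_\vep t^\vep$ rather than by any pointwise bound.

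\emph{Main obstacle.} The difficulty, and the entire point of dispensing with Lindelöf, is to reach the \emph{sharp} exponent $1-\vep$ using only the supercritical moment \eqref{Lp bound Eis}. Crude implementations fall short: comparing $E_t$ to $a_0$ on half-period intervals of length $\asymp 1/t$ would require $R_t\ll 1$ at $\asymp t$ points, but converting the $L^p$ bound to such pointwise control (via Plancherel--Pólya or Nikolskii, $\|E_t\|_\infty\ll t^{1/p+\vep}$) only caps the number of bad intervals at $t^{1+\vep}$, which is vacuous, while the same $L^\infty$ losses limit a naive integration-by-parts argument to the weaker $K\gg t^{1-1/p}$. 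The crux is therefore a more careful, scale-sensitive version of Step 3---adapting Ki's sign-change analysis---that exploits the strictly-below-$t$ frequencies of $R_t$ and the spreading of mass from $p>2$ to recover the full $t^{1-\vep}$ without ever passing through a pointwise bound on $E_t$. I expect this quantitative sign-change lemma to be the main technical content of the proof.
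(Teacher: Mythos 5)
Your proposal has the right inputs in Step 2's first half (the lower bound $M_2(E_t)\gg 1$ from Young, and the H\"older interpolation using \eqref{Lp bound Eis} to get $M_1(f_t)\geq c\,M_2(f_t)$ with $c\asymp t^{-p\varepsilon/(p-2)}$ --- this matches \S\ref{ss:proof 13} of the paper exactly), but the engine that converts these into $t^{1-\varepsilon}$ sign changes is missing, and you say so yourself: ``I expect this quantitative sign-change lemma to be the main technical content of the proof.'' It is. The paper does not use a resonance lower bound against $y^{it}$ at all. Instead it runs a Littlewood-type criterion (Lemma \ref{thm:Littlewood}): besides $M_1\geq cM_2$, one needs an \emph{upper} bound on the short smoothings
$J(f_t,\eta)=\frac{1}{b-a}\int_a^b\bigl|\int_0^\eta f_t(i(y+v))\,\rd v\bigr|\,\rd y$
at scale $\eta=t^{\delta-1}$. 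The logic is the opposite of yours: if $f_t$ kept a constant sign on most $\eta$-windows, then $J$ would be at least of order $c^3\eta M_2$, so the bound $J(f_t,\eta)\ll \eta\log^9(t)\,t^{-\delta/2}$ (Proposition \ref{prop:J bound}) forces a sign change in a positive proportion of the $N=t^{1-\delta}$ windows. This smoothing is precisely the ``scale-sensitive'' device you anticipate but do not construct, and it is how the argument avoids ever passing through a pointwise bound.

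Your Step 2 resonance claim is also not a nonstationary-phase triviality and, as stated, is unjustified. After twisting by $y^{it}$, the $n$-th Bessel mode has $\log y$-frequency $t-\sqrt{t^2-(2\pi ny)^2}\asymp (ny)^2/t$, so every mode with $n\ll\sqrt{t}$ is \emph{nearly stationary} on $[a,b]$; with amplitudes of size about $|\sigma_{-2it}(n)|\,t^{-1/2}$ (after the $e^{\pi t/2}$ normalization, up to powers of $\log t$ from $\zeta(1+2it)^{-1}$), the trivial bound on their total is a power of $\log t$, not $O(t^{-\delta})$. Extracting cancellation in that near-diagonal sum amounts to bounding averages of $\zeta(\tfrac12+i(t\pm r))$ near $r=0$ --- which is exactly where the paper inserts its unconditional replacement for Lindel\"of: via the Mellin representation of $K_{it}$ one reduces to the second moment of $L(t,\tfrac12+ir)=\zeta(\tfrac12+i(t+r))\zeta(\tfrac12+i(t-r))$, which Cauchy--Schwarz converts into Heath-Brown's fourth moment \eqref{4 moment}, whose power-saving error term is essential for the short ranges $|r|\leq 1/\eta$. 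Your proposal never invokes any zeta-moment estimate, so it contains no unconditional substitute for Lindel\"of; combined with the deferred sign-change lemma, the proof is incomplete, although your diagnosis of the obstacle (naive pointwise control capping you at $K\gg t^{1-1/p}$) is accurate and consistent with why the paper proceeds through $J(f_t,\eta)$ and moments instead.
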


\begin{remark}\label{rmk:epsilon0}
    Explicitly what we show is that the bound of order $t_j^\epsilon$ for the $L^p$ norm, implies a lower bound of order $t_j^{1-\epsilon'}$ for $K^\beta(E_{t_j})$ with any  $\epsilon'>\frac{8p}{p-2}\epsilon$ (see \S \ref{ss:proof 13}). In particular, a sufficiently strong subconvex bound for the sup norm of $E_{t}$ of order $t^\nu$ with $\nu<\frac{1}{8}$ is already sufficient to obtain a non trivial lower bound for  $K^\beta(E_{t})$. We note however that with the current best known bound for the sup norm of $E_t$ we can only take $\nu>\frac{1}{3}$, which is not sufficient to get an unconditional improvement here.  
\end{remark}



The key insight in the proof of Theorem \ref{thm:cond Eis} is to show that, rather than the Lindel\"of hypothesis, one can make do with an estimate on the $L^2$ norm of the $L$-function associated to the Eisenstein series, which translates to a fourth moment estimate on the Riemann zeta function. For Maass forms, we can make the same simplification. However, while the $L^2$ estimate for the associated $L$-function is certainly weaker than the Lindel\"of hypothesis and is known in many instances, it is still not known in the precise setup needed 
in our context.
In fact, such estimates also appear in the study of restricted quantum unique ergodicity for Maass forms and would be of interest there (see \cite{Young2018}). We state the requisite $L^2$ estimate below as Conjecture \ref{con:L2}. Assuming this conjecture holds, we can prove the analogue of Theorem \ref{thm:cond Eis} 
in the context of cusp forms:

\begin{theorem}\label{thm:cond}
    Fix $\beta=i[a,b]$  a compact segment of the imaginary line. Let $\phi_j$ be a sequence of even Hecke cusp forms and 
    assume that there is some $p>2$ such that for 
    any $\vep>0$,
    \begin{align} \label{Lp bound}
        \left(\int_{a}^b |\phi_j(iy)|^p {\rd y\over y}\right)^{1/p} \ll_\vep t_{\phi_j}^\vep.
    \end{align}
    Further, assume that $\phi_j$ satisfy Conjecture \ref{con:L2}.
     Then for any $\vep>0$,
    \begin{align}\label{K bound cond}
        K^\beta(\phi_j) \gg_\vep t_{\phi_j}^{1-\vep}.
    \end{align}
    
\end{theorem}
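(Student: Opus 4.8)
The plan is to run the same argument used for Theorem \ref{thm:cond Eis}, substituting the fourth-moment bound for $\zeta$ by the mean-square estimate of Conjecture \ref{con:L2} for $L(\tfrac12+i\tau,\phi_j)$. First I would pass to the additive coordinate $u=\log y$, so that $\beta$ becomes $[A,B]=[\log a,\log b]$, the measure $\tfrac{\rd y}{y}$ becomes $\rd u$, and the hypothesis \eqref{Lp bound} reads $\|g_j\|_{L^p[A,B]}\ll_\vep t_{\phi_j}^\vep$ for $g_j(u):=\phi_j(ie^u)$. Because $\phi_j$ is even, $g_j$ is real-valued and its Mellin transform is, up to archimedean factors, the completed standard $L$-function; the Hecke hypothesis is what lets Conjecture \ref{con:L2} speak about this $L$-function. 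The goal \eqref{K bound cond} then becomes the statement that $g_j$ has $\gg_\vep t_{\phi_j}^{1-\vep}$ sign changes on $[A,B]$.

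The upper-bound half rests on the principle that a real function with few sign changes cannot correlate strongly with a high-frequency oscillation. Writing $g_j=|g_j|\,\eta$, where $\eta$ is the $\pm1$ sign pattern, a step function with $K$ jumps on $[A,B]$, the key point is that $\eta$ has effective bandwidth $\ll K$; hence when $K$ is much smaller than $t_{\phi_j}$ the sign pattern is nearly orthogonal to the frequency-$\tau$ oscillation with $\tau\asymp t_{\phi_j}$. Quantifying this with H\"older's inequality applied to the pair $(p,2)$---pairing the $L^p$ control of $g_j$ against an $L^2$ (Plancherel) estimate for the high-frequency tail of $\eta$---bounds the correlation $\int_A^B g_j(u)\,w(u)e^{-i\tau u}\,\rd u$ by a multiple of $\|g_j\|_p\,(K/t_{\phi_j})^{c}$ for some $c>0$. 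The conjugate exponent $\tfrac{2p}{p-2}$ of this pair is what produces (and explains the blow-up as $p\to2^+$ of) the constant $\tfrac{8p}{p-2}$ in Remark \ref{rmk:epsilon0}, and forces the hypothesis $p>2$.

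For the lower-bound half I would exhibit a frequency $\tau\asymp t_{\phi_j}$ (a fixed fraction of $t_{\phi_j}$, so as to sit strictly inside the band where the archimedean weight is of full size) on which the correlation is genuinely large. Unfolding $\int_A^B g_j(u)\,w(u)e^{-i\tau u}\,\rd u$ by the Mellin transform, with $w$ a fixed smooth cutoff adapted to $[a,b]$ and the tails toward the cusp negligible by cuspidal decay, expresses it as a short average of the critical-line values $L(\tfrac12+i\tau,\phi_j)$ weighted by the $\Gamma$-factor, which is essentially flat over $|\tau|\ll t_{\phi_j}$ and decays beyond. The difficulty---and the main obstacle of the whole proof---is that Conjecture \ref{con:L2} furnishes only an \emph{upper} bound on the mean square of $L(\tfrac12+i\tau,\phi_j)$, whereas a lower bound on the correlation requires showing this oscillatory mass is actually present. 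I would resolve this in the standard way: compute a first-moment lower bound for the average of $L(\tfrac12+i\tau,\phi_j)$ over the window (a main term one can extract), and combine it with the second-moment upper bound of Conjecture \ref{con:L2} via Cauchy--Schwarz to conclude that the correlation is $\gg t_{\phi_j}^{-\vep}$ on a positive-density set of admissible frequencies, with no need for the pointwise control that Lindel\"of would provide.

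Finally I would combine the two halves: the lower bound $\gg t_{\phi_j}^{-\vep}$ on the correlation, fed against the decorrelation inequality with $\|g_j\|_p\ll t_{\phi_j}^\vep$, forces $K\gg t_{\phi_j}^{1-\vep'}$ with any $\vep'>\tfrac{8p}{p-2}\vep$; letting $\vep\to0$ yields \eqref{K bound cond}. The bookkeeping of exponents is exactly that of \S\ref{ss:proof 13} for the Eisenstein case, the only change being the provenance of the mean-square input, namely Conjecture \ref{con:L2} in place of the fourth-moment bound for $\zeta$.
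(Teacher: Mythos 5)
Your central ``decorrelation'' step is false as stated, and it is the load-bearing step of the whole plan. The principle that a real function with few sign changes cannot correlate with a high-frequency oscillation fails without further structure: $g(u)=1+\cos(\tau u)$ has \emph{no} sign changes, satisfies $\|g\|_{L^p[A,B]}\asymp 1$, and yet $\int_A^B g(u)w(u)e^{-i\tau u}\,\rd u=\tfrac12\widehat{w}(0)+o(1)\asymp 1$. The factorization $g_j=\abs{g_j}\,\eta$ does not rescue the argument: the correlation is the convolution $\int \widehat{\abs{g_j}w}(\xi)\,\widehat{\eta}(\tau-\xi)\,\rd\xi$, and the bandwidth bound $\widehat{\eta}(\xi)\ll K/\abs{\xi}$ only helps if $\widehat{\abs{g_j}w}$ carries no mass near $\xi\approx\tau$. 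For a Maass form restricted to the geodesic this is exactly wrong: the Mellin spectrum of $g_j$ fills the whole window $\abs{r}\le t_{\phi_j}$ (the archimedean factor $\gamma(\tfrac12+ir,t)$ is of polynomial size throughout that range, not peaked at $r\approx t$), so $\abs{g_j}$ generically has spectral mass at frequency $\tau$, and the low-frequency part of $\eta$ contributes a term you cannot control --- this is precisely the failure mode of the counterexample above. Moreover, even where your H\"older/Plancherel pairing is legitimate (the high-frequency tail of $\eta$, with $\|\eta_{\mathrm{high}}\|_2\ll K t^{-1/2}$), it yields at best $K\gg t^{1/2-\varepsilon}$, not $t^{1-\varepsilon}$; the exponent $c$ in your claimed bound $\|g_j\|_p(K/t)^c$ is never actually produced, and attributing the constant $\tfrac{8p}{p-2}$ to a conjugate-exponent computation is a rationalization rather than bookkeeping you have carried out.

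The paper's proof avoids single-frequency correlations entirely. It applies the Littlewood-type Lemma \ref{thm:Littlewood}: if $f=y^{-1/2}\phi_j$ changes sign rarely, then on most intervals of length $\eta$ one has $\abs{\int f}=\int\abs{f}$, so the smoothed antiderivative $J(f,\eta)=\frac{1}{b-a}\int_a^b\abs{\int_0^\eta f(i(y+v))\,\rd v}\,\rd y$ must be $\gtrsim \eta M_1(f)$; a contradiction is then derived from three inputs. First, $M_2(\phi)\gg 1$ (Proposition \ref{prop:lower bound cusp}) --- this replaces your proposed first-moment lower bound for $L$-values, which you would otherwise have to prove from scratch. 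Second, the interpolation $M_2(f)^2\le M_1(f)^{\frac{p-2}{p-1}}M_p(f)^{\frac{p}{p-1}}$ together with hypothesis \eqref{Lp bound} gives $M_1(f)\ge C\,M_2(f)\,t^{-\frac{p\varepsilon}{p-2}}$; this is where $p>2$ and the exponent constant genuinely enter. Third, the upper bound $J(f,\eta)\ll \eta\,t^{\varepsilon}(\eta t)^{-1/2}+t^{\varepsilon-1}$, proved via the Mellin--Barnes representation of $K_{it}$, requires controlling $\int\abs{L_\phi(\tfrac12+ir)}^2\,\rd r$ over the \emph{entire} spectral range $0<r<t_{\phi_j}$ --- Conjecture \ref{con:L2} on dyadic blocks for $1/\eta<r<t_{\phi_j}$ and convexity for $r>t_{\phi_j}$ --- not at one frequency $\tau\asymp t_{\phi_j}$. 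Choosing $\eta=t^{\delta-1}$ with $\delta$ a suitable multiple of $\varepsilon$ then gives $K^\beta(\phi_j)\gg t^{1-\delta-\frac{2p\varepsilon}{p-2}}$. If you wish to salvage your outline, the repair is to replace the correlation functional by $J(f,\eta)$, i.e., to control the antiderivative of $g_j$ at scale $\eta$ across the full spectrum; at that point you will have reproved Lemma \ref{thm:Littlewood} and the paper's Proposition bounding $J$.
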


\begin{remark}\label{rmk:geod}
    As stated, the above theorems concern the geodesic $z=iy$. In fact, the proof below works for any cuspidal geodesic $x+iy$ with $x= \frac{p}{q}$ a rational number. For this, we require estimates on the second moment of the series
    \begin{align*}
        \sum_{n} \frac{a_f(n)e(nx)}{n^s}
    \end{align*}
    and a lower bound on the  $L^2$-norm of the Eisenstein series/cusp form along $\beta=\{x+iy: a<y<b\}$. 

    The lower bound is proved in \cite{Young2018} for Eisenstein series, and in \cite{GhoshReznikovSarnak2013} (although this is only proved for the lines $x=0$, and  $x=\tfrac{1}{2}$) for cusp forms.

    For the estimates on the twisted $L$ series, we split into congruence classes modulo $q$ using Dirichlet characters. This allows us to write the $L$ function as 
    \begin{align*}
        \frac{1}{\phi(q)}\sum_{a \mod q} e_q(aq) \sum_{\chi} \overline{\chi}(a) \sum_{n} \frac{a_f(n)\chi(n)}{n^s}.
    \end{align*}
    Now for cusp forms, bounding the inner twisted $L$-function requires us to extend Conjecture \ref{con:L2} to these. For the Eisenstein series, this requires known estimates for the $4$th moment of Dirichlet $L$-functions \cite{Topacogullari21}. 

\end{remark}

\subsection{Proof strategy}

For both Eisenstein series and Maass forms, the proofs of Theorems \ref{thm:cond}, 
and \ref{thm:cond Eis} 
follow the same strategy. The starting point is \cite[Proof of Theorem 1]{Ki2023}, wherein Ki conditionally proves the inequality \eqref{K bound cond} for all cusp forms (the method also applies to Eisenstein series).


The key idea in our proof is a modification of Ki's argument, allowing us to replace the full strength of the Lindel\"of hypothesis with corresponding bounds on the second moment of the associated $L$-function. For cusp forms, this is Conjecture \ref{con:L2},  while for the Eisenstein series, this boils down to fourth moment estimates on the Riemann zeta function which are well-known (see \S \ref{ss:Eis L}).



\subsection*{Notation}
We use standard Vinogradov notation that $f\ll g$ if there is a constant $C>0$ so that $f(x)\le C g(x)$ for all $x$.

\subsection*{Acknowledgements}
We thank Valentin Blomer, Henryk Iwaniec, and Matt Young for many insightful discussions. Moreover we would like to thank Bingrong Huang, Xiaocheng Li, and Simon Marshall for spotting an error in a preprint. 
This paper was written while the second-named author was visiting Princeton University; he would like to express his gratitude for their hospitality.

\section{Preliminaries}

\subsection{Littlewood's sign changes lemma}

A key analytic ingredient in Ki's proof is \cite[Theorem 2.2]{Ki2023}, which is a variant on a theorem of Littlewood \cite{Littlewood1966} controlling the number of zeros of a real valued function. While Ki's formulation (as well as Littlewood's) discusses the number of zeros, we note that the argument actually controls the number of sign changes. For the sake of completeness, we include the proof of this result below.

Given a real valued function $f$ on the interval $I=[a,b]$ let $M_p(f)$ denote the $L^p(I)$ norm:
\begin{align*}
  M_p(f) : = \left(\frac{1}{|I|} \int_{I} \abs{f(y)}^p \rd y \right)^{1/p}.
\end{align*}

The following is a slight variant of \cite[Theorem 2.2]{Ki2023}.
\begin{lemma} \label{thm:Littlewood}
Let $f$ be a real valued function defined on an open interval containing $I=[a,b]$.
Let $N\in \N$ be sufficiently large so that $f$ is defined on $[a,b+\eta]$ with $\eta=\frac{|I|}{N}$, and define
\begin{align*}
  J(f, \eta) = \frac{1}{|I|} \int_I \abs{ \int_0^\eta f(y+ v) \rd v } \rd y.
\end{align*}
Suppose that there is some $c\in(0,1)$ such that $M_1(f)\ge c M_2(f)$ and that
  $J(f, \eta) < \frac{c^3 \eta M_2(f)}{16}$.
  Then the number of sign changes, $K^I(f)$, of $f$ on $I$ satisfies 
  $$
  K^{I}(f) \ge \frac{c^2 N}{8}.
  $$
\end{lemma}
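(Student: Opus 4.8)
The plan is to show that $J(f,\eta)$, which is an average of $f$ over the moving windows $[y,y+\eta]$, is forced to be large unless $f$ changes sign often; comparing this with the hypothesis $J(f,\eta)<c^3\eta M_2(f)/16$ then yields the asserted lower bound on $K^I(f)$. Write $K=K^I(f)$ and let $x_1<\cdots<x_K$ be the points of $I$ where $f$ changes sign. First I would split $I=[a,b]$ into the ``bad'' set $B=\{y\in I:[y,y+\eta]\ \text{contains some}\ x_i\}\subseteq\bigcup_i(x_i-\eta,x_i)$, which has measure $|B|\le K\eta$, and the complementary ``good'' set $G=I\setminus B$. On $G$ the function $f$ keeps a constant sign across each window, so $\abs{\int_0^\eta f(y+v)\,\rd v}=\int_0^\eta\abs{f(y+v)}\,\rd v$ there.

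Next I would isolate the main term. Discarding the contribution of $B$ and applying Fubini,
\[
 |I|\,J(f,\eta)\ \ge\ \int_0^\eta\!\int_{a+v}^{b+v}\abs{f(u)}\,\rd u\,\rd v\ -\ \int_0^\eta\!\int_{B}\abs{f(y+v)}\,\rd y\,\rd v.
\]
For the first integral, $\int_{a+v}^{b+v}\abs{f(u)}\,\rd u\ge\int_{a+\eta}^{b}\abs{f(u)}\,\rd u=|I|M_1(f)-\int_a^{a+\eta}\abs{f(u)}\,\rd u$, and Cauchy--Schwarz bounds the boundary loss by $\int_a^{a+\eta}\abs{f(u)}\,\rd u\le\eta^{1/2}|I|^{1/2}M_2(f)$. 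For the second integral, Cauchy--Schwarz together with $|B|\le K\eta$ gives, for each $v$, $\int_B\abs{f(y+v)}\,\rd y\le(K\eta)^{1/2}|I|^{1/2}M_2(f)$, where I extend the $L^2$ norm harmlessly to $[a,b+\eta]$. Dividing by $|I|$ and using $(K\eta/|I|)^{1/2}=(K/N)^{1/2}$ together with $N^{-1/2}\le(K/N)^{1/2}$ (valid once $K\ge1$; if $K=0$ then $J(f,\eta)\ge\eta M_1(f)\ge c\eta M_2(f)$ already contradicts the hypothesis), I arrive at
\[
 J(f,\eta)\ \ge\ \eta M_1(f)-2\eta\Big(\tfrac{K}{N}\Big)^{1/2}M_2(f)\ \ge\ \eta M_2(f)\Big(c-2\big(\tfrac{K}{N}\big)^{1/2}\Big),
\]
the last step invoking the non-concentration hypothesis $M_1(f)\ge cM_2(f)$.

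Finally, combining with $J(f,\eta)<c^3\eta M_2(f)/16$ and $c<1$ forces $c-2(K/N)^{1/2}<c^3/16<c/16$, hence $(K/N)^{1/2}>15c/32$ and therefore $K>(15/32)^2c^2N>c^2N/8$, as desired. I expect the main obstacle to be purely a matter of careful bookkeeping: establishing $|B|\le K\eta$ cleanly (i.e.\ that $K$ sign changes can make at most a set of measure $K\eta$ of windows straddle a sign change), and tracking the two error terms—the window-straddling contribution from $B$ and the boundary contribution near $a$—so that the constants $16$ in the hypothesis and $8$ in the conclusion close up with room to spare. The role of the assumption $M_1(f)\ge cM_2(f)$ is precisely to prevent $f$ from concentrating, which would otherwise make the main term $\eta M_1(f)$ too small to dominate the $M_2$-controlled errors.
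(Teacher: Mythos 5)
Your route is genuinely different from the paper's, and its skeleton is sound. The paper argues by contradiction: it cuts $I$ into $N$ equal subintervals $I_m$, lets $E$ be the union of those in which $f$ does not change sign, uses $M_1(f)\ge cM_2(f)$ to produce a level set $H=\{y:\abs{f(y)}\ge cM_2(f)/2\}$ of measure $>c^2/4$, and then bounds $J$ from below by integrating $\abs{f}$ over $H\cap E_t$, contradicting $J<c^3\eta M_2(f)/16$ if too few subintervals contain a sign change. You instead argue directly: your bad set is indexed by the sign-change points themselves (so $|B|\le K\eta$), the main term $\eta M_1(f)\ge c\eta M_2(f)$ comes straight from the non-concentration hypothesis, and the losses are handled by Cauchy--Schwarz. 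Your version avoids the level-set step, needs no contradiction, and is more quantitative: your bookkeeping gives $K>(15/32)^2c^2N$, which is better than $c^2N/8$. Both arguments rest on the same two pillars --- on windows free of sign changes the inner integral of $J$ becomes $\int_0^\eta\abs{f(y+v)}\,\rd v$, and $M_1\ge cM_2$ prevents concentration --- but the implementations are genuinely distinct.

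There is, however, one genuine gap: the right edge. The windows $[y,y+\eta]$ for $y\in(b-\eta,b]$ protrude beyond $b$, where the hypotheses control $f$ only through $J$ itself. Your sign-change points $x_i$ live in $I$, so a sign change of $f$ inside $(b,b+\eta]$ is invisible to your bad set $B$, and the claim that $f$ has constant sign on every window with $y\in G$ fails for such $y$. Likewise, extending the $L^2$ norm to $[a,b+\eta]$ is \emph{not} harmless: $M_2(f)$ is an average over $I$ only, and $f$ may be arbitrarily large on $(b,b+\eta]$, so $\int_B\abs{f(y+v)}\,\rd y\le(K\eta)^{1/2}|I|^{1/2}M_2(f)$ is not justified by the stated hypotheses. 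The same overhang silently invalidates your $K=0$ fallback $J(f,\eta)\ge\eta M_1(f)$. The repair is to discard the strip: lower bound $|I|\,J(f,\eta)$ by the integral over $y\in[a,b-\eta]\setminus B$ only, so every window stays inside $I$; this costs a second boundary term (from $\int_{b-\eta}^{b}\abs{f}$) and yields $J(f,\eta)\ge\eta M_2(f)\bigl(c-2N^{-1/2}-(K/N)^{1/2}\bigr)$. The constants still close, but with less room than you claim: one now gets $(K/N)^{1/2}>\tfrac{15c}{16}-2N^{-1/2}$, which gives $K>\tfrac{25}{64}c^2N>c^2N/8$ once $N\ge 1024/(25c^2)$, while the regime $N\ll c^{-2}$ (where the claimed bound amounts to $K\ge1$) needs separate, easy treatment. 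You are in good company: the paper's own proof elides the same edge effects (its identity $J_m^*=J_m$ for $m\in\mathcal{M}_2$ ignores sign changes in $I_{m+1}$, including beyond $b$ when $m=N$, and the shift $E\mapsto E_t$ loses measure up to $\eta$ against $H\subseteq I$), and likewise implicitly requires $N\gg c^{-2}$. So your oversight is repairable and comparable to slack in the original, but as written the step bounding the bad-set contribution does not follow from the hypotheses.
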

\begin{proof}
By scaling and shifting $f$ we may assume that $I=[0,1]$ and $\eta=\frac{1}{N}$.
For any $1\leq m\leq N$ let $I_m=[\frac{m-1}{N},\frac{m}{N})$, and define 
$$J_m(f,\eta):=\int_{I_m}\left|\int_0^\eta f(y+t)dt\right|dy.$$ 
Let $\cM_1=\{1\leq m\leq N: \mbox{ $f$ changes sign in $I_m$}\}$
and let $\cM_2$ be its complement. Since in any interval $I_m$ with $m\in \cM_1$ there is at least one sign change of $f$, we have that $K^I(f)\geq |\cM_1|=N-|\cM_2|$.
Let $E=\cup_{m\in \cM_2} I_m$ so that $|E|=\frac{|\cM_2|}{N}$ and the result will follow by showing that $|E|<1-\frac{c^2}{8}$. We assume now that $|E|>1-\frac{c^2}{8}$ and proceed by contradiction. 

Let $H:=\{y\in I: |f(y)|\geq \frac{c M_2(f)}{2}\}$, then the assumption $M_1(f)\ge c M_2(f)$ implies that $|H|\geq 1-\frac{c^2}{4}$.
Indeed, we can estimate 
$$c M_2(f)\ \leq\  M_1(f)\ \leq \ \int_{H}f+\int_{H^c}f \ \leq \ |H|^{1/2}M_2(f)+(1-|H|)\frac{c M_2(f)}{2}.$$
Setting $X=\sqrt{|H|}$, then from the above display we see that $X^2-\frac{2}{c}X+1\leq 0$ hence $X> \frac{c}{2}$ so $|H|>\frac{c^2}{4}$.
For any $m\in \cM_2$ we have that   
$J_m^*(f,\eta):=\int_{I_m}\int_0^\eta |f(y+t)|dtdy=J_m(f,\eta)$. We can estimate on one hand 
$$\sum_{m\in \cM_2}J_m^*(f,\eta)=\sum_{m\in \cM_2}J_m(f,\eta)\leq J(f,\eta)<\frac{c^3 \eta M_2(f)}{16}.$$
On the other hand, we have 
$$\sum_{m\in \cM_2}J_m^*(f,\eta)=\int_0^\eta \int_E |f(y+t)|dydt=\int_0^\eta \int_{E_t} |f(y)|dydt,$$
where $E_t$ is the shift of $E$ by $t$. By our assumption $|E_t|=|E|>1-\frac{c^2}{8}$ and since 
$$\frac{c^2}{4}\leq |H|=|H\cap E_t|+|H\cap E_t^c|<|H\cap E_t|+\frac{c^2}{8},$$ using our bounds on $|H|$, we also have that $|H\cap E_t|>\frac{c^2}{8}$.
We can thus bound
$$\int_0^\eta \int_{E_t} |f(y)|dydt\geq \int_0^\eta \int_{E_t\cap H} |f(y)|dydt>  \frac{c^3 \eta M_2(f)}{16},$$
in contradiction.
\end{proof}

\subsection{Preparation for Eisenstein series}
\label{ss:Eis L}
We now collect a number of results regarding the Eisenstein series and its $L$-function that will be needed in our proof.

    


The first result we need is a lower bound on $M_2(E_t)$ given in \cite{Young2018} that is needed in order to apply Lemma \ref{thm:Littlewood}.
\begin{proposition}[{\cite[Theorem 1.1]{Young2018}}]\label{prop:lower bound}
\begin{align*}
    M_2(E_t) : = \left(\frac{1}{b-a} \int_a^b \abs{E_t(iy)}^2 \rd y \right)^{1/2} \gg (\log T)^{1/2}
\end{align*}
for any $t\in \R$ and any fixed segment $(a,b)$. 
\end{proposition}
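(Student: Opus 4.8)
The plan is to work directly with the Fourier expansion of $E_t$ restricted to the imaginary axis and to extract the lower bound from the diagonal of the resulting second moment. Writing $s=\tfrac12+it$, the expansion reads
\[
E_t(iy)=\sqrt{y}\bigl(y^{it}+\varphi(\tfrac12+it)y^{-it}\bigr)+\frac{4\pi^{1/2+it}\sqrt{y}}{\Gamma(\tfrac12+it)\zeta(1+2it)}\sum_{n\ge 1}\lambda_t(n)\,K_{it}(2\pi n y),
\]
where $\lambda_t(n)=\sum_{ad=n}(a/d)^{it}$ is real and $\varphi$ is the scattering term with $|\varphi(\tfrac12+it)|=1$. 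The first bracket is $O(\sqrt y)$, so it contributes $O(1)$ to $\int_a^b|E_t|^2\,\rd y$ and is harmless; all of the growth must come from the Bessel sum. First I would record the uniform asymptotics of $K_{it}(x)$ in the oscillatory range $x<(1-\delta)t$, where $K_{it}(x)$ has amplitude $\asymp e^{-\pi t/2}(t^2-x^2)^{-1/4}$ and a phase $\omega_n(y)$ with $\tfrac{\rd}{\rd y}\omega_n(y)=y^{-1}\sqrt{t^2-(2\pi n y)^2}$; the range $x\ge t$ contributes a negligible, exponentially small tail. The key structural point is that the factor $e^{-\pi t/2}$ exactly cancels the $e^{-\pi t/2}$ coming from $|\Gamma(\tfrac12+it)|$, so the effective size of the $n$-th term is $\tfrac{\sqrt y\,|\lambda_t(n)|}{|\zeta(1+2it)|\,(t^2-(2\pi n y)^2)^{1/4}}$, with no spurious exponential.

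Next I would open the square $\int_a^b|E_t(iy)|^2\,\rd y$ and isolate the diagonal $n=m$. Since $[a,b]$ is fixed and $y\asymp 1$, only $n\ll t$ survive, each Bessel square averages $\tfrac12$, and a direct computation of $\int_a^b y\,(t^2-(2\pi n y)^2)^{-1/2}\,\rd y\asymp t^{-1}$ (uniformly for $n\ll t$) collapses the diagonal to
\[
\frac{c}{t\,|\zeta(1+2it)|^{2}}\sum_{n\ll t}\lambda_t(n)^{2}.
\]
Here the decisive cancellation occurs: by the Rankin--Selberg identity $\sum_n\lambda_t(n)^2 n^{-w}=\frac{\zeta(w)^2\zeta(w+2it)\zeta(w-2it)}{\zeta(2w)}$, the double pole at $w=1$ gives $\sum_{n\le X}\lambda_t(n)^2\sim \frac{|\zeta(1+2it)|^2}{\zeta(2)}X\log X$ for $X\asymp t$ (the poles at $w=1\pm 2it$ contribute only $O((\log t)^3)$, hence are lower order). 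Taking $X\asymp t$, the factor $|\zeta(1+2it)|^2$ cancels against the normalization and the diagonal is $\asymp\log t$, which is exactly the claimed order. This is comfortable for a lower bound, since I only need the main term to dominate the secondary poles, and $(\log t)^3=o(t\log t)$.

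The main obstacle, and where the real content lies, is showing that the off-diagonal $n\ne m$ terms (and the cross terms between the constant term and the Bessel sum) are $o(\log t)$, so that the bound survives as $\text{(diagonal)}-|\text{(off-diagonal)}|\gg\log t$. The computed phase gives $\tfrac{\rd}{\rd y}(\omega_n-\omega_m)\asymp y(n^2-m^2)/t$, so the $\omega_n+\omega_m$ part oscillates rapidly and is negligible, but the $\omega_n-\omega_m$ part has small phase derivative when $n,m$ are close, and a first-derivative test combined only with the trivial bound $|\lambda_t(n)|\le d(n)$ is far too lossy (it yields powers of $\log t$ that exceed the main term). What is genuinely required is arithmetic cancellation among the $\lambda_t(n)$, i.e. control of the attendant shifted-divisor/shifted-convolution sums, which is precisely a fourth-moment estimate for $\zeta$ on the critical line. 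I would implement this either via a smooth majorant/minorant weight $w$ with $\mathrm{supp}(w)\subset(a,b)$ (so that the $y$-integrals decay super-polynomially once the phase derivative exceeds $1$, truncating the off-diagonal to $|n-m|\ll t/(n+m)$) and then invoke the fourth-moment input, or by following the Rankin--Selberg/Zagier regularization for $|E_t|^2$ along the segment. This step is the heart of the matter and is carried out in \cite{Young2018}; the diagonal analysis above is what pins down the order $\log t$ and explains the crucial cancellation of $|\zeta(1+2it)|^2$.
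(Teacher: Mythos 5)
First, a point of orientation: the paper itself does not prove this proposition --- it is imported verbatim as Theorem~1.1 of Young \cite{Young2018} --- so the only fair benchmark is Young's own argument, and your sketch does reproduce its skeleton faithfully: restriction of the Fourier expansion to the imaginary axis, uniform asymptotics of $K_{it}$ in the oscillatory range $x<(1-\delta)t$ with the exact cancellation of $e^{-\pi t/2}$ against $|\Gamma(\tfrac12+it)|$, a diagonal main term evaluated through the Ramanujan-type identity $\sum_n \lambda_t(n)^2 n^{-w}=\zeta(w)^2\zeta(w+2it)\zeta(w-2it)/\zeta(2w)$, and the crucial cancellation of $|\zeta(1+2it)|^2$ between the residue at $w=1$ and the normalizing factor $|\zeta(1+2it)|^{-2}$. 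Your diagonal bookkeeping is correct, including the point that the residues at $w=1\mp 2it$ carry a factor $X^{1\mp 2it}/(1\mp 2it)$ of modulus $\asymp X/t\asymp 1$ and hence contribute only $O((\log t)^{3})$, which is dominated by the main term because $\zeta(1+2it)\gg (\log t)^{-7}$; this correctly pins down the order $\log t$ for $\int_a^b|E_t(iy)|^2\,\rd y$ and hence $(\log t)^{1/2}$ for $M_2$.

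The genuine gap is that the step you yourself identify as ``the heart of the matter'' --- showing the off-diagonal terms are $o(\log t)$ --- is not proved but delegated to \cite{Young2018}, i.e.\ to the very theorem being established, so as a standalone proof the proposal is circular at its decisive point. Your diagnosis of why it is hard is accurate (the phase difference $\omega_n-\omega_m$ has derivative $\asymp y(n^2-m^2)/t$, so a first-derivative test with only $|\lambda_t(n)|\le d(n)$ leaves roughly the ranges $|n-m|\ll t^{1+\vep}/(n+m)$ untreated, and the trivial divisor bound there overshoots $\log t$), but naming the needed input (``cancellation in shifted divisor sums $\sum_{n\le X}\lambda_t(n)\lambda_t(n+h)$, uniformly in the shift $h$, equivalently fourth-moment-of-$\zeta$ technology'') is not the same as supplying it: one would have to actually import and verify a uniform shifted-divisor estimate of Estermann/Motohashi type, or a spectral fourth-moment bound, with uniformity in both $h$ and $t$, and check it beats the diagonal by a power of $\log$ after summing over the admissible shifts. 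Two smaller loose ends of the same kind: the cross terms between the constant term $\sqrt{y}\,(y^{it}+\varphi(\tfrac12+it)y^{-it})$ and the Bessel sum are only asserted to be small (they need a stationary-phase argument in $y$, which does work since the combined phase derivative is $\gg t$ on a fixed segment, but it should be said), and the Airy transition range $2\pi ny\approx t$ is waved away, though it is harmless. In short: correct identification of the architecture and a sound diagonal computation, but the off-diagonal estimate --- which is the entire analytic content of Young's theorem --- is missing rather than proved.
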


The second result we need is about the size of the $L$-function of the Eisenstein series on the critical line, which can be written explicitly in terms of the Riemmann zeta function. Recall, the Lindel\"of hypothesis predicts that, for any $\vep >0$ and all $t \in \R$, one has $|\zeta(\frac{1}{2} + it)|= O((1+|t|)^{\vep})$. While the Lindel\"of hypothesis is far from reach of modern technology, there are some results concerning moment bounds on the zeta function which will suffice for our purposes. The following classical theorem was proven by Heath-Brown
\begin{theorem}[\cite{HeathBrown1979}]
    There is $\kappa>0$ such that for any $T$ large one has
    \begin{align}\label{4 moment}
        \frac{1}{T} \int_0^{T} \abs{\zeta(\frac{1}{2}+ it)}^{4} \rd t =  P_4(\log(T))+O(T^{-\kappa}),
    \end{align}
    with $P_4(x)$ a polynomial of degree $4$.
\end{theorem}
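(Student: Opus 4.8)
The plan is to reduce the fourth moment to the mean value of a Dirichlet polynomial via the approximate functional equation, to evaluate the diagonal contribution so as to extract the polynomial $P_4$, and to control the off-diagonal contribution through the analytic theory of the binary additive divisor problem.

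First I would write $|\zeta(\tfrac12+it)|^4 = |\zeta(\tfrac12+it)^2|^2$ and exploit the Dirichlet series $\zeta(s)^2 = \sum_{n\ge1} d(n) n^{-s}$, where $d$ is the divisor function. The approximate functional equation for $\zeta(s)^2$, with the functional-equation factor $\chi(s)$ defined by $\zeta(s)=\chi(s)\zeta(1-s)$, represents $\zeta(\tfrac12+it)^2$ up to negligible error as a Dirichlet polynomial $\sum_{n\le N} d(n) n^{-1/2-it}$ of length $N \asymp t/(2\pi)$, together with its $\chi^2$-dual. Forming $|\zeta^2|^2$ and integrating over $t\in[0,T]$ then reduces the problem to sums of the shape $\sum_{m,n}\frac{d(m)d(n)}{\sqrt{mn}}\, W(m,n)\int (n/m)^{it}\,\rd t$ for smooth cut-off weights $W$.

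Next I would isolate the diagonal $m=n$, which yields $\int_0^T \bigl(\sum_n \tfrac{d(n)^2}{n} W(n,n)\bigr)\,\rd t$. Inserting the classical asymptotic $\sum_{n\le X} d(n)^2 \sim \tfrac{1}{\pi^2} X(\log X)^3$ --- equivalently the order-four pole at $s=1$ of $\sum d(n)^2 n^{-s}=\zeta(s)^4/\zeta(2s)$ --- and summing by parts produces the main term as a degree-four polynomial $P_4$ in $\log T$, with leading coefficient $\tfrac{1}{2\pi^2}$, recovering Ingham's asymptotic after dividing by $T$.

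The hard part is the off-diagonal range $m\ne n$, where $\int_0^T (n/m)^{it}\,\rd t = \frac{(n/m)^{iT}-1}{i\log(n/m)}$ is appreciable only when $m$ and $n$ are close. Writing $n=m+h$ with $h\ne 0$, this reduces to the shifted convolution (\emph{additive divisor}) sums $\sum_{n} d(n)d(n+h)(\cdots)$, which I would evaluate using the Estermann zeta function $D(s,h)=\sum_{n\ge1} d(n)d(n+h) n^{-s}$, whose meromorphic continuation and functional equation give $\sum_{n\le X} d(n)d(n+h)$ as a quadratic polynomial in $\log X$ (with $h$-dependent arithmetic coefficients) plus an error that is power-saving and uniform in $h$ over the relevant range. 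Reassembling, the $h$-main terms combine with the diagonal to complete $P_4$ and produce secondary terms that cancel, while the accumulated errors deliver the bound $O(T^{-\kappa})$ after normalization (Heath-Brown's bound for the unnormalized integral is $O(T^{7/8+\vep})$, giving $\kappa=\tfrac18-\vep$). The principal technical obstacle is maintaining uniformity in the shift $h$ and in the Dirichlet-polynomial lengths while tracking every contribution of size exceeding $T^{1-\kappa}$, since all four powers of $\log T$ must be captured exactly rather than merely to leading order.
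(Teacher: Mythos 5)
The paper does not prove this statement at all: it is quoted as an external input from Heath-Brown \cite{HeathBrown1979}, so the only meaningful benchmark is the original proof, and your outline reconstructs essentially that argument --- approximate functional equation for $\zeta(s)^2$ of length about $t/2\pi$, diagonal evaluation via $\sum_{n\le X} d(n)^2 \sim \pi^{-2}X\log^3 X$ producing the degree-four polynomial with leading coefficient $\tfrac{1}{2\pi^2}$, and the off-diagonal reduced to shifted divisor sums $\sum_n d(n)d(n+h)$ handled with a power saving uniform in $h$ through Estermann's $D(s,h)$, whose functional equation brings in Kloosterman sums controlled by Weil's bound; your quoted error $O(T^{7/8+\vep})$, hence $\kappa=\tfrac18-\vep$, is also the correct exponent. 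The one point where your sketch understates the difficulty is the phrase ``up to negligible error'': the error terms in the unsmoothed approximate functional equation are \emph{not} negligible at power-saving precision, which is precisely why Heath-Brown works with a smoothed, Atkinson-style decomposition and must track the cross terms between the main sum, its $\chi^2$-dual, and the correction terms --- but as a plan your route is the historically correct one and contains no wrong step.
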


\subsection{Preparation for Maass forms}\label{ss:maaass}
We now collect the corresponding results we need to apply the argument for Maass forms. 

Once again, the lower bound we need for $M_2(\phi)$ is known, this time having been proved by Ghosh, Reznikov and Sarnak \cite{GhoshReznikovSarnak2013}. 

\begin{proposition}[{\cite[Theorem 6.1]{GhoshReznikovSarnak2013}}]\label{prop:lower bound cusp}
\begin{align*}
    M_2(\phi) : = \left(\frac{1}{b-a} \int_a^b \abs{\phi(iy)}^2 \rd y \right)^{1/2} \gg 1
\end{align*}
for any segment $(a,b)$. 
\end{proposition}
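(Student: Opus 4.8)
The plan is to show that the restricted $L^2$ mass cannot be smaller than a fixed positive fraction of the main term coming from the average size of $\abs{\phi}^2$, uniformly in the spectral parameter $t_\phi$. Since $a,b$ are fixed, $\frac{1}{b-a}\int_a^b\abs{\phi(iy)}^2\,\rd y$ is comparable to the hyperbolic integral $\int_a^b \abs{\phi(iy)}^2\,\tfrac{\rd y}{y}$, so it suffices to bound the latter below by an absolute constant. The first step is to produce a main term of size $\gg 1$: regard $\Phi:=\abs{\phi}^2=\phi^2$ (recall $\phi$ is real-valued) as an element of $L^2(\Gamma\bk\bH)$, whose mean value is $\tfrac{1}{\mathrm{vol}(\Gamma\bk\bH)}\int_{\Gamma\bk\bH}\abs{\phi}^2\,\rd\mu=\tfrac{1}{\mathrm{vol}(\Gamma\bk\bH)}$ by the normalization in (iii). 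Integrating the spectral expansion of $\Phi$ along the geodesic segment then gives
\[
\int_a^b \abs{\phi(iy)}^2\,\frac{\rd y}{y}=\frac{\log(b/a)}{\mathrm{vol}(\Gamma\bk\bH)}+\mathcal{E}(\phi),
\]
where the first term is a fixed positive constant and $\mathcal{E}(\phi)$ collects the contribution of the nonconstant spectral components.

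The heart of the argument is to show $\abs{\mathcal{E}(\phi)}$ is below half the main term once $t_\phi$ is large. Here $\mathcal{E}(\phi)=\sum_j\langle\phi^2,u_j\rangle\int_a^b u_j(iy)\,\tfrac{\rd y}{y}+(\text{continuous part})$, where the geodesic periods $\int_a^b u_j(iy)\tfrac{\rd y}{y}$ depend only on $u_j$ and not on $\phi$. I would split this sum at a threshold in $t_j$. For $t_j$ in a fixed bounded range, each triple product $\langle\phi^2,u_j\rangle\to0$ as $t_\phi\to\infty$ by arithmetic quantum unique ergodicity (Lindenstrauss), since $u_j$ is orthogonal to the constants; so this fixed-length portion tends to $0$. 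For large $t_j$ one uses that the normalized triple product decays rapidly in the degenerate archimedean regime $t_j\gg t_\phi$, which, with polynomial bounds for the geodesic periods, renders the tail summable and small uniformly in $\phi$. An entirely parallel computation via the Fourier expansion $\phi(iy)=2\sqrt{y}\sum_{n\ge1}\rho(n)K_{it_\phi}(2\pi n y)$ makes the same dichotomy visible directly: the diagonal $m=n$ in $\abs{\phi(iy)}^2$, once the Hecke/Rankin--Selberg normalization of the $\rho(n)$ and the mean square of $K_{it_\phi}$ along $[a,b]$ are inserted, reproduces the main term, while the off-diagonal terms $m\ne n$ oscillate in $y$ and are annihilated by stationary phase.

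Combining these, for $t_\phi$ exceeding some absolute $T_0$ we obtain $\abs{\mathcal{E}(\phi)}\le\tfrac12\,\tfrac{\log(b/a)}{\mathrm{vol}(\Gamma\bk\bH)}$, hence $\int_a^b\abs{\phi(iy)}^2\tfrac{\rd y}{y}\gg1$. The finitely many forms with $t_\phi\le T_0$ (finite by Weyl's law) are treated separately: a Maass cusp form is real-analytic and cannot vanish identically on a geodesic arc by unique continuation, so each has strictly positive restricted norm, and finiteness yields a uniform positive lower bound. I expect the main obstacle to be exactly the uniform-in-$\phi$ control of $\mathcal{E}(\phi)$: pointwise QUE alone does not suffice, and one needs effective decay estimates for the triple products $\langle\phi^2,u_j\rangle$ across the full spectral range together with growth bounds for the geodesic periods of $u_j$ — precisely the quantitative input that upgrades two-dimensional equidistribution to control of a one-dimensional restriction.
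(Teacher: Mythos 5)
First, note that the paper contains no proof of this proposition: it is imported verbatim from Ghosh--Reznikov--Sarnak \cite[Theorem 6.1]{GhoshReznikovSarnak2013}, where the argument proceeds through the Fourier--Bessel expansion of $\phi$ along the vertical geodesic: the diagonal $m=n$ produces the main term via Rankin--Selberg asymptotics for $\sum_{n\le N}|\lambda_\phi(n)|^2$ together with asymptotics for $\int K_{it_\phi}(2\pi ny)^2\,\rd y$ (with the Hoffstein--Lockhart bound $L(1,\mathrm{sym}^2\phi)\gg t_\phi^{-\vep}$ needed to make the constant uniform in $\phi$), while the off-diagonal is controlled by a combination of oscillation and genuine shifted-convolution estimates. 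Your primary route --- spectrally expanding $\phi^2$ and invoking QUE --- has a gap that you yourself name but do not close, and it is not a peripheral technicality: the triple products $\langle\phi^2,u_j\rangle$ decay exponentially only in the range $t_j>2t_\phi$ (the ``degenerate'' archimedean regime begins at $2t_\phi$, not at a vague $t_j\gg t_\phi$), while Lindenstrauss--Soundararajan QUE is ineffective and gives no uniformity in $j$; so the entire bulk range $T_0\le t_j\le 2t_\phi$ is uncontrolled. By Watson's formula, bounding that range amounts to estimating sums of central $L$-values $L(\frac12,u_j)L(\frac12,\mathrm{sym}^2\phi\otimes u_j)$ against geodesic periods, which is precisely the open quantitative restricted-QUE problem (cf.\ \cite{Young2018}); moreover, integrating the $L^2$-convergent spectral expansion of $\phi^2$ term by term over a measure-zero curve is itself unjustified without exactly these effective bounds. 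As written, the proposal is therefore a program rather than a proof.

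Your parenthetical ``parallel computation via the Fourier expansion'' is in fact the route the cited proof takes, but the one-line dismissal of the off-diagonal is incorrect: stationary phase does \emph{not} annihilate all $m\neq n$ terms. In the oscillatory regime of $K_{it_\phi}$, for $m,n\asymp N\lesssim t_\phi$ and shift $h=m-n$, the $y$-derivative of the phase of $K_{it_\phi}(2\pi my)K_{it_\phi}(2\pi ny)$ on a fixed compact segment $[a,b]$ is of size roughly $hN/t_\phi$, so all shifts $h\lesssim t_\phi/N$ are non-oscillatory and survive; these contribute shifted-convolution sums $\sum_n\lambda_\phi(n)\lambda_\phi(n+h)$ whose cancellation must be established by arithmetic input (spectral methods for shifted convolutions), and this --- not stationary phase --- is the heart of the matter. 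Your treatment of the finitely many forms with $t_\phi\le T_0$ by unique continuation and compactness is fine, but the two essential uniform estimates (bulk spectral range in the QUE route, or equivalently the near-diagonal shifted convolutions and the $L(1,\mathrm{sym}^2\phi)$ lower bound in the Fourier route) are missing, so the proposal does not yet establish the proposition.
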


The final ingredient we need is an estimate for the $L$-function associated to the cusp for $\phi$, we now describe.
Given a cusp for $\phi$, we consider the Fourier expansion
\begin{align*}
    \sum_{n \neq 0} \rho_\phi(n) y^{1/2} K_{it_\phi} (2\pi \abs{n} y) e(nx),
\end{align*}
where $K$ is the $K$-Bessel function. Furthermore, we let $\lambda_\phi(n) = \frac{\rho_\phi(n)}{\rho_\phi(1)}$ denote the eigenvalues of the Hecke operators.

With the Fourier coefficients in hand we define the associated $L$-function
\begin{align} \label{L cusp}
       L_{\phi}(s) : = \sum_{n=1}^\infty \frac{\lambda_\phi(n)}{n^s}.
\end{align}
The following conjecture gives a mean square bound for this $L$-function. 
\begin{Con} \label{con:L2}
    Let $\phi$ be a Maass form with spectral parameter $t_\phi$. There exists a $\delta>0$ such that, for $2T\le t_\phi \le T^{1+\delta}$ and every $\vep>0$, we have
\begin{align}\label{L cusp bound}
    \frac{1}{T}\int_T^{2T} \abs{L_{\phi}(\frac{1}{2}+it)}^2 \rd t \ll t_\phi^\vep,
\end{align}
as $T\to\infty$.
\end{Con}

Such an estimate clearly follows from the Lindel\"of hypothesis, and we note that for the range $2T > t_\phi$ the estimate \eqref{L cusp bound} is known (see \cite[Section 6.1]{GhoshReznikovSarnak2013}). While it is possible that our range $2T\le t_\phi \le T^{1+\delta}$ is also within reach of current technology we were not able to establish it and thus leave it as an open conjecture.

\section{Proof for Eisenstein series}

We start by proving Theorem \ref{thm:cond Eis}. The proof for cusp forms is more or less identical; we explain the major differences in \S \ref{s:cusp proof}. The proof for both is an application of Theorem \ref{thm:Littlewood}  for which we require a lower bound on $M_2(\cdot)$ (see Proposition \ref{prop:lower bound}) and an upper bound on $J(\cdot)$.

\subsection{Upper bound on $J$}
Rather than work with $E_t(z)$ it is more convenient to work with
\begin{align*}
    f_{t}(z) = \frac{1}{\sqrt{y}}E_t(z),
\end{align*}
since $y$ is bounded away from $0$ and $\infty$, any statement about zeroes or nodal lines for $f_t$ holds equally well for $E_t$.
Thanks to Theorem \ref{thm:Littlewood}, our goal is now to bound
\begin{align}
    J(f_t,\eta) := \frac{1}{b-a}\int_a^b \abs{\int_0^\eta f_{t}(i(y+v)) \rd v}\rd y.
\end{align}

\begin{proposition} \label{prop:J bound}
    Fix an interval $(a,b) \subset \R_{>0}$ for all $\frac{2}{t}<\eta <1$ and $t\geq 10$ sufficiently large
    \begin{align}
        J(f_t,\eta)  \ll  \eta \left ( \frac{ \log(t)^{9}}{\sqrt{\eta t}} +\frac{ \log(t)^{7}}{t^{\kappa/2} }\right)+ \frac{(\log(t))^{9}}{t}
    \end{align}
\end{proposition}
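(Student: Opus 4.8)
The plan is to start from the Fourier expansion of the Eisenstein series along the imaginary axis,
\begin{align*}
    f_t(iy)=\frac{1}{\sqrt y}E_t(iy)=y^{it}+\varphi(\tfrac12+it)\,y^{-it}+\frac{4}{\theta(\tfrac12+it)}\sum_{n\ge1}\eta_{it}(n)\,K_{it}(2\pi n y),
\end{align*}
with $\theta(s)=\pi^{-s}\Gamma(s)\zeta(2s)$ and $|\varphi(\tfrac12+it)|=1$, and to split $J(f_t,\eta)$ into the contribution of the first two (constant) terms and that of the Bessel sum. For the constant term, integrating $\int_0^\eta(y+v)^{\pm it}\,\rd v=\frac{(y+\eta)^{1\pm it}-y^{1\pm it}}{1\pm it}$ gives $O(1/t)$ uniformly for $y\in[a,b]$; this is the source of the additive $\frac{(\log t)^9}{t}$ term (I would record it as $\ll 1/t$ and let the exact logarithmic power be dictated by the main term). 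Throughout I would replace every $t^\vep$ of a naive treatment by an explicit power of $\log t$, drawn from the fourth moment bound \eqref{4 moment} (whose main term is a degree-$4$ polynomial in $\log T$) and from the classical lower bound $\zeta(1+2it)\gg(\log t)^{-7}$.

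The core is the oscillating piece $\mathcal J$ obtained by keeping only the Bessel sum. I would insert the Mellin--Barnes representation $K_{it}(z)=\frac{(z/2)^{it}}{4\pi i}\int_{(c)}\Gamma(\nu)\Gamma(\nu-it)(z/2)^{-2\nu}\,\rd\nu$, interchange the $n$-sum with the $\nu$-integral, and identify the resulting Dirichlet series with the $L$-function $L(t,\nu)=\zeta(\nu+it)\zeta(\nu-it)$; after shifting the contour to $\nu=\tfrac12+ir$ and applying Cauchy--Schwarz in $y$ together with Plancherel in $r$, this bounds $\mathcal J^2$ by
\begin{align*}
    \int_a^b\int_{\R}\abs{I(\eta,\tfrac12+ir,y)\,\gamma(\tfrac12+ir,t)\,L(t,\tfrac12+ir)}^2\,\rd r\,\rd y,
\end{align*}
where $\gamma(\nu,t)=\theta(\tfrac12+it)^{-1}\Gamma(\tfrac{\nu+it}{2})\Gamma(\tfrac{\nu-it}{2})\pi^{-\nu}$ and $I(\eta,\nu,y)=\int_0^\eta(y+v)^{-\nu}\,\rd v$.

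The estimate then rests on two elementary inputs. First, Stirling shows $\gamma(\tfrac12+ir,t)$ is, up to the factor $\zeta(1+2it)^{-1}\ll(\log t)^7$, of polynomial size $((r+t)(1+|r-t|))^{-1/4}$ for $0\le r\le t$ and exponentially small (like $e^{-\pi(r-t)/2}$) once $r>t$, so it is negligible beyond $r\asymp t\log t$. Second, stationary phase (one integration by parts) gives $I(\eta,\tfrac12+ir,y)\ll\min(\eta,1/r)$. I would split the $r$-integral at the scales $1/\eta$, $t$, and $t\log t$: the tail $r>t\log t$ is killed by the exponential decay together with the convexity bound $\zeta(\tfrac12+it)\ll t^{1/4}$, contributing $O(1/t^2)$; on the remaining ranges I use the factorization $L(t,\tfrac12+ir)=\zeta(\tfrac12+i(r+t))\zeta(\tfrac12+i(r-t))$, shift $r$ by $t$, decompose dyadically, and separate the two zeta factors by Cauchy--Schwarz so that \eqref{4 moment} applies to each. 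The dominant contribution comes from $0<r<1/\eta$, where $I\ll\eta$ and the weight in $\gamma^2$ is $\asymp(\log t)^{14}/t$; bounding $\int_0^{1/\eta}|L|^2\,\rd r$ via \eqref{4 moment} by $\ll\eta^{-1}(\log t)^4$ yields $\mathcal J\ll\frac{\eta(\log t)^{18}}{t}$, whose square root is the leading term $\frac{\sqrt\eta(\log t)^9}{\sqrt t}=\eta\cdot\frac{(\log t)^9}{\sqrt{\eta t}}$, while the $O(T^{-\kappa})$ error in \eqref{4 moment} produces the $\eta\,\frac{(\log t)^7}{t^{\kappa/2}}$ term.

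I expect the main obstacle to be applying the fourth moment bound \eqref{4 moment} on the shifted intervals of length $1/\eta$ that arise after the substitution $r\mapsto r\pm t$: since \eqref{4 moment} is stated as an average over $[0,T]$, extracting a short-interval fourth moment forces one to subtract two long averages, and the resulting $O(T^{-\kappa})$ error is benign only when the interval is not too short. It is precisely this error, rather than the polynomial main term, that is responsible for the second summand $\eta\,\frac{(\log t)^7}{t^{\kappa/2}}$, and keeping it under control while simultaneously tracking the exact power of $\eta$ (so as to obtain the $\frac{1}{\sqrt{\eta t}}$ shape needed to feed Lemma \ref{thm:Littlewood}) is the delicate part of the argument.
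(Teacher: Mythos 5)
Your proposal is correct and follows essentially the same route as the paper's proof: Fourier expansion, the Mellin--Barnes representation of $K_{it}$, the identification $L(t,\nu)=\zeta(\nu+it)\zeta(\nu-it)$, Stirling with $\zeta(1+2it)^{-1}\ll(\log t)^{7}$, the bound $I\ll\min(\eta,1/r)$, and a range splitting with Cauchy--Schwarz, dyadic decomposition, and the Heath--Brown fourth moment (whose $O(T^{-\kappa})$ error, exactly as you flag, is what produces the $\eta(\log t)^{7}t^{-\kappa/2}$ term via the short-interval differencing $\int_{t-1/\eta}^{t+1/\eta}|\zeta|^4$). The only cosmetic deviation is your extra cut at $r=t\log t$ where the paper splits simply at $r=t$ and uses convexity for the whole tail, which changes nothing in the final bound.
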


\begin{proof}

First, Fourier expand the Eisenstein series: \cite[(3.20)]{Iwaniec2002}
\begin{align*}
    E(z,s) = y^s + \varphi(s) y^{1-s} + \frac{4 \sqrt{y}}{\theta(s)} \sum_{n=1}^\infty \eta_{s-1/2}(n) K_{s-1/2}(2\pi n y) \cos(2\pi n x)
\end{align*}
with $\theta(s) = \pi^{-s} \Gamma(s) \zeta(2s)$ and $\varphi(s) = \theta(1-s) \theta(s)^{-1}$, and where
\begin{align*}
    \eta_t(n) = \sum_{ab=n} \left(\frac{a}{b}\right)^t.
\end{align*}

With that, we define the $L$-function
\begin{align*}
    L(t,\nu)  &= \sum_{n \ge 1} \frac{\eta_{it}(n)}{n^\nu}
\end{align*}
It's well-known that this $L$-function can be related to the Riemann zeta function:
\begin{align}
    L(t,\nu) &= \sum_{n \ge 1} \frac{1}{n^{\nu+it}}\sum_{d| n} d^{2it}  \notag \\
    &= \sum_{d \ge 1}  d^{2it}   \sum_{n \equiv 0 \mod d } \frac{1}{n^{\nu+it}}  \notag \\
    &= \sum_{d \ge 1}  d^{2it}   \sum_{n \ge 1} \frac{1}{n^{\nu+it} d^{\nu+it}} 
    =\zeta(\nu+it)\zeta(\nu-it). \label{L to zeta}
\end{align}

Following  \cite[Proof of Lemma 4.1]{Ki2023} we can relate $J(f_t,\eta)^2$ to this $L$-function. Specifically, we can write
\begin{align*}
    J(f_t,\eta)^2 &= \left(\frac{1}{b-a}\int_a^b \abs{\int_0^\eta f_t(i(y+v))\rd v}\rd y\right)^2 \\   
    &\ll \left(\frac{1}{b-a}\int_a^b \abs{ 
  \frac{(y+\eta)^{1+it} - y^{1+it}}{1+it} +\varphi(\frac12+it)\frac{(y+\eta)^{1-it} - y^{1-it}}{1-it} } \rd y\right)^2 \\ 
  &\phantom{++} + \left(\frac{1}{b-a}\int_a^b \abs{ \int_0^\eta \left[\frac{4 }{\theta(s)} \sum_{n=1}^\infty \eta_{it}(n) K_{it}(2\pi n (y+v)) e(nx) \right] \rd v} \rd y\right)^2\\
   &\ll \frac{1}{t^{2}} + \cJ(t),
\end{align*}
where
\begin{align*}
    \cJ(t) = \left(\frac{1}{b-a}\int_a^b \abs{ \int_0^\eta \left[\frac{4 }{\theta(s)} \sum_{n=1}^\infty \eta_{it}(n) K_{it}(2\pi n (y+v)) e(nx) \right] \rd v} \rd y\right)^2.
\end{align*}

From here we can expand the $K$-Bessel function \cite[(10.32.13)]{Olver}, that is,
\begin{align*}
    K_{it}(z) = \frac{(z/2)^{it}}{4\pi i} \int_{(c)} \Gamma(\nu) \Gamma(\nu-it) \left(\frac{z}{2}\right)^{-2\nu} \rd \nu,
\end{align*}

\noindent and set $c=1/4$, yielding
\begin{align*}
    \cJ(t) &\ll \int_a^b \abs{ \int_0^\eta \int_{(1/4)}  \left[\frac{ (y+v)^{-2\nu+it}}{\theta(\frac{1}{2} + it)} \Gamma(\nu) \Gamma(\nu-it) \sum_{n=1}^\infty \eta_{it}(n)   \left(\pi n\right)^{-2\nu+it}  \right] \rd \nu\rd v}^2 \rd y\\
    &= \int_a^b \abs{ \int_0^\eta \int_{(1/4)}  \left[\frac{ (y+v)^{-2\nu}}{\theta(\frac{1}{2} + it)} \Gamma(\nu+\frac{it}{2}) \Gamma(\nu-\frac{it}{2}) \sum_{n=1}^\infty \eta_{it}(n)   \left(\pi n\right)^{-2\nu}    \right] \rd \nu\rd v}^2 \rd y \\
    &\ll \int_a^b  \int_{(1/2)} \abs{I(\eta,\nu,y) \gamma(\nu,t) L(t,\nu)}^2  \rd \nu \rd y
\end{align*}
where $\gamma(\nu,t) = \frac{ \Gamma(\frac{\nu+it}{2}) \Gamma(\frac{\nu- it}{2})}{\theta(\frac{1}{2} + it)} \pi^{-\nu} $ and $I(\eta,y;\nu) : = \int_0^\eta (y+v)^{-\nu} \rd v$. 
We now estimate the inner integral. 
Write $\nu = 1/2+ ir$, and using the invariance under  $r\mapsto -r$ it is enough to estimate the integral 
$$ \int_0^\infty \abs{I(\eta,y,\frac12+ir) \gamma(\frac12+ir,t) L(t,\frac12+ir)}^2  \rd r.$$
Noting that $\eta<1$ and that the interval $(a,b)$ is fixed, we can bound the integral, $I$ by
\begin{align}\label{I bound}
    I(\eta,y; \tfrac12+ir) = \int_0^\eta (y+v)^{-1/2} e^{ -i \log(y+v)r} \rd v
    \ll
    \min(\eta, \frac{1}{|r|}).
\end{align} 
Using  Stirling's formula, the $\gamma$-factor can be bounded by
\begin{align*}
    \gamma(\frac12+ir,t) &= \frac{ \Gamma(\frac{1/2 + ir +it}{2}) \Gamma(\frac{1/2 + ir -it}{2})}{\theta(\frac{1}{2} + it)} \pi^{-1-ir}\notag \\
    &\ll \frac{ e^{-\pi |t+r|/4} e^{-\pi \abs{r-t}/4}}{e^{-\pi t/2} \zeta(1+2it)} \frac{1}{((1+\abs{r-t})(r+t))^{1/4}}\\
    &\ll  (\log(t))^7 \frac{ e^{-\pi |t+r|/4} e^{-\pi \abs{r-t}/4}e^{\pi t/2} }{ ((1+\abs{r-t})(1+|r+t|))^{1/4}}
    , \label{gamma bound}
\end{align*}
where we used the bound $\zeta(1+2it) \gg \frac{1}{\log(t)^7}$  (see \cite[(3.6.5)]{Titchmarsh1951}).

First when $r\geq t$ we can bound 
$$ | I(\eta,y; \tfrac12+ir) \gamma(\frac12+ir,t) |^2 \ll \log(t)^{14} \frac{e^{-\pi (r-t)}}{((1+(r-t))(1+(r+t))^{1/2} r^2},$$
and using the convexity bound $\zeta(\frac{1}{2} + it) \ll t^{1/4}$ for the zeta function we can bound 
$$|L(t,\frac12+ir)|^2=|\zeta(\frac12+i(t+r))\zeta(\frac12+i(r-t))|^2\ll ((1+(r-t))(1+(r+t))^{1/2},$$
hence in this range 
 $$| I(\eta,y; \tfrac12+ir) \gamma(\frac12+ir,t) |^2\ |L(t,\frac12+ir)|^2\ \ll \  t^{-2} \log(t)^{14} e^{-\pi (r-t)},$$
 and we can bound 
\begin{align}
\int_t^\infty \abs{I(\eta,y,\frac12+ir) \gamma(\frac12+ir,t) L(t,\frac12+ir)}^2  \rd r &\ll \frac{\log(t)^{14}}{t^2}.
\end{align}
Next for the range $r\leq \frac{1}{\eta}\leq \frac{t}{2}$ we can bound 
$$ | I(\eta,y; \tfrac12+ir) \gamma(\frac12+ir,t) |^2 \ll  \frac{\eta^2 \log(t)^{14}}{t},$$
to get
\begin{align*}
 \int_0^{1/\eta} \abs{I(\eta,y,\frac12+ir) \gamma(\frac12+ir,t) L(t,\frac12+ir)}^2  \rd r \ll  \frac{\eta^2 \log(t)^{14}}{t}\int_0^{1/\eta} |L(t,\frac12+ir)|^2\rd r.\\
\end{align*}
Now use Cauchy-Schwarz for the inner integral together with \eqref{4 moment} to bound 
\begin{align*}
\int_0^{1/\eta} |L(t,\frac12+ir)|^2\rd r&  \ll (\int_0^{1/\eta}| \zeta(\frac12+i(t-r))|^4\rd r)\int_0^{1/\eta}|\zeta(\frac12+i(t+r)|^4\rd r)^{1/2}\\
 &\ll \int_{t-1/\eta}^{t+1/\eta} |\zeta(\frac12+ir)|^4dt\\
 &\ll (t+1/\eta)P_4(\log(t+1/\eta)))-(t-1/\eta)P_4(\log(t-1/\eta))+O(t^{1-\kappa})\\
 &\ll  \frac{\log(t)^4}{\eta}+t^{1-\kappa}
\end{align*}
to conclude that 
\begin{align*}
 \int_0^{1/\eta} \abs{I(\eta,y,\frac12+ir) \gamma(\frac12+ir,t) L(t,\frac12+ir)}^2  \rd r\ll  \eta^2\left(\frac{ \log(t)^{18}}{\eta t} +\frac{ \log(t)^{14}}{t^\kappa}\right).
\end{align*}
Finally, in the range $\frac{1}{\eta}\leq r\leq t$ we first bound
$$ | I(\eta,y; \tfrac12+ir) \gamma(\frac12+ir,t) |^2 \ll  \frac{ (\log(t))^{14} }{r^2((1+(t-r))(1+t+r))^{1/2}},$$
hence
\begin{align*}
 \int_{1/\eta}^t \abs{I(\eta,y,\frac12+ir) \gamma(\frac12+ir,t) L(t,\frac12+ir)}^2  \rd r&\ll  (\log(t))^{14} \int_{1/\eta}^t \frac{ |L(t,\frac12+ir)|^2}{r^2 ((1+(t-r))(1+t+r)))^{1/2}}\rd r\\
 &\ll \frac{ (\log(t))^{14}}{\sqrt{t}} \int_{0}^{t-1/\eta} \frac{|L(t,\frac12+i(t-r))|^2}{ (t-r)^2(1+r)^{1/2}}\rd r.\\
\end{align*}
Split the integral into dyadic intervals to estimate 
\begin{align*}
  \int_{0}^{t-1/\eta} \frac{|L(t,\frac12+i(t-r))|^2}{ (t-r)^2(1+r)^{1/2}}\rd r &\ll  t^{-2}\int_0^1 |L(t,\frac12+i(t-r))|^2 \rd r\\
  &+\sum_{k=1}^{\log(t-1/\eta)}\frac{1}{2^{k/2}(t-2^k)^2}\int_{2^{k-1}}^{2^k}|L(t,\frac12+i(t-r))|^2\rd r\\
\end{align*}
We can bound the first integral by 
\begin{align*}
\int_0^1 |L(t,\frac12+i(t-r))|^2 \rd r&=\int_0^1 |\zeta(\frac{1}{2}+ir)|^2|\zeta(\frac{1}{2}+i(2t-r)|^2 \rd r\\
& \leq\left( \int_0^1 |\zeta(\frac{1}{2}+ir)|^4 \rd r\int_{2t-1}^{2t} |\zeta(\frac{1}{2}+ir|^4 \rd r \right)^{1/2}\\
& \ll  t^{1/2} (\log t)^2
\end{align*}
and for each dyadic interval with $A=2^k\leq t$ we have
\begin{align*}
\int_{A}^{2A}|L(t,\frac12+i(t-r))|^2\rd r &\ll \int_{A}^{2A} |\zeta(\frac12+i(2t-r)|^2|\zeta(\frac{1}{2}+ir)|^2\rd r\\
&\ll \left(\int_{2t-2A}^{2t-A} |\zeta(\frac12+ir|^4 \rd r \int_{A}^{2A}|\zeta(\frac{1}{2}+ir)|^4\rd r\right)^{1/2}\\
&\ll (A\log^4(t)+t^{1-\kappa})^{1/2}(A\log^4(t))^{1/2}\ll \log^4(t)(A+t^{\frac{1-\kappa}{2}}A^{1/2}).
\end{align*}

Hence 
\begin{align*}
\int_{0}^{t-1/\eta} \frac{|L(t,\frac12+i(t-r))|^2}{ (t-r)^2(1+r)^{1/2}}\rd r&  \ll  t^{-3/2}(\log t)^2+(\log(t))^4 \sum_{k=1}^{\log(t-1/\eta)}\frac{2^{k/2} +t^{\frac{1-\kappa}{2}}}{(t-2^k)^2}\\
 &\ll  t^{-3/2}+ (\log t)^4\int_1^{\log(t-1/\eta)}\frac{2^{u/2} +t^{\frac{1-\kappa}{2}}}{(t-2^u)^2}du\\
 &\ll  \frac{ (\log t)^4}{ t^{3/2}},
 \end{align*}
 and 
\begin{align*}
 \int_{1/\eta}^t \abs{I(\eta,y,\frac12+ir) \gamma(\frac12+ir,t) L(t,\frac12+ir)}^2  \rd r &\ll \frac{ (\log(t))^{18}}{t^2}.\end{align*}
Combining the three terms and integrating over the outer interval $(a,b)$  we get that 
$$J(f_t,\eta)^2\ll \eta^2\left(\frac{ \log(t)^{18}}{\eta t} +\frac{ \log(t)^{14}}{t^\kappa}\right)+ \frac{(\log(t))^{18}}{t^2},$$
and taking a square root concludes the proof.
\end{proof}



\subsection{Proof of Theorem \ref{thm:cond Eis}} \label{ss:proof 13}%
Let $t_j\to \infty$ be a sequence for which we have we have the $L^p$ bounds
$M_p(f_{t_j})\ll_\epsilon t_j^\epsilon$ and use $L^p$ interpolation to bound 
$$M_2(f_{t_j})^2\leq M_1(f_{t_j})^{\frac{p-2}{p-1}}M_p(f_{t_j})^{\frac{p}{p-1}}.$$
This combined with the uniform lower bound $M_2(f_t)\gg 1$ implies that there is a constant $C_1=C_1(\epsilon)$ so that
$$M_1(f_{t_j})\geq C_1 M_2(f_{t_j}) t_j^{-\frac{\epsilon p}{p-2}}.$$
Let $c=C_1t_j^{-\frac{ p\epsilon }{p-2}}$ and $\eta=\frac{1}{N}=t_j^{\delta-1}$,
so that $c^3\eta M_2(f_{t_j})\geq C_2 \eta t_j^{-\frac{3 p\epsilon}{p-2}}$. 
From the upper bound
$$J(f_{t_j},\eta)\leq C_5\eta \log(t_j)^9t_j^{-\delta/2},$$
we see that $J(f_{t_j},\eta)\leq \frac{c^3}{16}\eta M_2(f_{t_j})$ as long as $\delta>\frac{6 p\epsilon}{p-2}$ in which case Theorem \ref{thm:Littlewood} implies that
$$K^\beta(f_{t_j})\geq C_3 t_j^{1-\delta-\frac{2 p \epsilon }{p-2}}$$
for an appropriate constant $C_3>0$. 
In particular, we see that for any $\kappa>8$, for all sufficiently large $t_j$ we have that 
$$K^\beta(f_{t_j})\geq  t_j^{1-\frac{\kappa p \epsilon }{p-2}},$$
from which the claim follows.

\section{Proof for cusp forms}
\label{s:cusp proof}

 The proof for cusp forms follows along nearly identical lines. Once again, to apply Theorem \ref{thm:Littlewood} we require a lower bound on $M_2(\cdot)$ (see Proposition \ref{prop:lower bound cusp} and an upper bound on $J(\cdot)$).

\subsection{Upper bound on $J$}

For the bound on $J(\phi,\eta)$ we again renormalize
\begin{align*}
    f(z) = \frac{1}{\sqrt{y}}\phi(z).
\end{align*}
Hence our goal is to bound
\begin{align}
    J(f,\eta)  := \frac{1}{b-a}\int_a^b \abs{\int_0^\eta f(i(y+v)) \rd v}\rd y,
\end{align}
as follows.

\begin{proposition} \label{prop:J bound}
Assuming Conjecture \ref{con:L2}, for any compact interval $(a,b) \subset \R_{>0}$ and $\eta \in (\frac{2}{t},1)$, for any $\vep>0$ we have that
    \begin{align}
        J(f,\eta) \ll  \eta  \frac{t_\phi^\vep}{\sqrt{\eta t_\phi}}+t^{\vep-1} .
    \end{align}
 \end{proposition}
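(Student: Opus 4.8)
The plan is to follow the proof of Proposition \ref{prop:J bound} for the Eisenstein series essentially verbatim, substituting the Fourier/Dirichlet-series data of $E_t$ with that of the even Hecke cusp form $\phi$, and, crucially, replacing the fourth-moment bound \eqref{4 moment} for $\zeta$ by the second-moment input of Conjecture \ref{con:L2}. First I would renormalize $f=\phi/\sqrt y$ and use the Fourier expansion of an even form on the imaginary axis, $f(iy)=2\sum_{n\ge 1}\rho_\phi(n)K_{it_\phi}(2\pi n y)$, where $\rho_\phi(n)=\rho_\phi(1)\lambda_\phi(n)$. Unlike the Eisenstein case, there is no constant term $y^s+\varphi(s)y^{1-s}$, since a cusp form has no zeroth Fourier coefficient; the elementary main term therefore disappears, and the residual $t^{\vep-1}$ will instead come entirely from the high-frequency tail of the $r$-integral below.

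Next, following \cite[Proof of Lemma 4.1]{Ki2023} together with the Mellin--Barnes expansion of $K_{it_\phi}$ and the contour shift to $\Re(\nu)=\tfrac12$, exactly as in the Eisenstein argument, I would arrive at
\[
J(f,\eta)^2 \ll \int_a^b \int_{(1/2)} \abs{ I(\eta,y;\nu)\, \gamma_\phi(\nu)\, L_\phi(\nu) }^2 \rd \nu \, \rd y,
\]
where $I(\eta,y;\nu)=\int_0^\eta (y+v)^{-\nu}\rd v$, the factor $L_\phi(\nu)=\sum_n \lambda_\phi(n)n^{-\nu}$ is the $L$-function \eqref{L cusp}, and $\gamma_\phi(\nu)=\rho_\phi(1)\pi^{-\nu}\Gamma(\tfrac{\nu+it_\phi}{2})\Gamma(\tfrac{\nu-it_\phi}{2})$ is the archimedean factor. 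The one structural difference from the Eisenstein case is that, because $\lambda_\phi(n)$ carries no spectral shift whereas $\eta_{it}(n)$ does, the relevant $L$-values are $L_\phi(\tfrac12+ir)$ at height $r$ rather than the product $\zeta(\tfrac12+i(t\pm r))$ centered at height $t$.

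I would then bound the two factors as before: $I(\eta,y;\tfrac12+ir)\ll \min(\eta,1/\abs r)$, and for $\gamma_\phi$ I would use Stirling together with the normalization $\abs{\rho_\phi(1)}^2\asymp \cosh(\pi t_\phi)/L(1,\mathrm{sym}^2\phi)$ and the standard lower bound $L(1,\mathrm{sym}^2\phi)\gg t_\phi^{-\vep}$, which here plays the role formerly played by $\zeta(1+2it)\gg \log(t)^{-7}$. As in \eqref{gamma bound}, the growth $\abs{\rho_\phi(1)}\asymp e^{\pi t_\phi/2}t_\phi^{o(1)}$ cancels the exponential decay of the two Gamma factors for $\abs r\le t_\phi$, leaving $\abs{\gamma_\phi(\tfrac12+ir)}^2\ll t_\phi^{\vep}\big((1+\abs{r-t_\phi})(1+\abs{r+t_\phi})\big)^{-1/2}$, while for $r>t_\phi$ the factor decays like $e^{-\pi(r-t_\phi)}$. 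Splitting the $r$-integral into $r>t_\phi$ (exponentially small, feeding the $t^{\vep-1}$ term), $1/\eta\le r\le t_\phi$ (where $I\ll 1/r$ forces lower-order behaviour), and $0\le r\le 1/\eta$ (where $I\ll \eta$ and, since $r\ll t_\phi$, the weight is flat of size $\asymp \eta^2 t_\phi^{-1}$), the last range produces the main term and reduces everything to $\int_0^{1/\eta}\abs{L_\phi(\tfrac12+ir)}^2\rd r \ll \tfrac1\eta\,t_\phi^\vep$, which I would extract by dyadic decomposition and Conjecture \ref{con:L2}.

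The main obstacle is precisely this last step. The smoothing operator $f\mapsto \int_0^\eta f(\cdot+v)\rd v$ suppresses all frequencies $r\gg 1/\eta$, so for a cusp form it concentrates the mass of $J$ onto the low-frequency range $r\ll 1/\eta$, which, by the no-shift phenomenon above, is exactly the \emph{central} range of $L_\phi$. This is in sharp contrast to the Eisenstein case, where the corresponding band sits near height $t$ and is governed by the classical fourth-moment bound \eqref{4 moment}. Thus the whole argument hinges on a second-moment estimate for $L_\phi$ near the central point, which is exactly why Conjecture \ref{con:L2} is needed and why the known estimate for $2T>t_\phi$ (controlling only the high-$r$ tail) does not suffice. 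The most delicate sub-range is the very bottom of the window, $r\ll t_\phi^{1/(1+\delta)}$, where the interval is far shorter than the conductor $\asymp t_\phi^2$; here the convexity bound is too weak to beat the main term and one must invoke the Lindel\"of-on-average savings of Conjecture \ref{con:L2} (or its extension down to this range). Granting this input, reassembling the three ranges and taking a square root yields the stated bound just as in the Eisenstein case.
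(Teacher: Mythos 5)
Your proposal is correct and follows essentially the same route as the paper's proof: the same renormalization $f=\phi/\sqrt{y}$ and Mellin--Barnes expansion of the Bessel function, the same bounds $I(\eta,y;\tfrac12+ir)\ll\min(\eta,1/|r|)$ and $|\gamma_\phi(\tfrac12+ir)|^2\ll t_\phi^{\vep}\bigl((1+|r-t_\phi|)(1+|r+t_\phi|)\bigr)^{-1/2}$ (the paper cites $\rho_\phi(1)\ll t_\phi^{\vep}e^{\pi t_\phi/2}$ from \cite{GhoshReznikovSarnak2013}, which is equivalent to your Hoffstein--Lockhart normalization), the same three-range split of the $r$-integral at $1/\eta$ and $t_\phi$, and the same dyadic application of Conjecture \ref{con:L2} in the dominant range $r\le 1/\eta$, with the middle and tail ranges feeding the $t^{\vep-1}$ term. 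Your closing caveat is apt and in fact more careful than the paper's own write-up: the paper invokes the second-moment bound across all of $[0,1/\eta]$ without comment, even though Conjecture \ref{con:L2} as stated only covers dyadic blocks with $T\ge t_\phi^{1/(1+\delta)}$, so the low-$T$ extension you flag is indeed being used implicitly there.
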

\begin{proof}
As for the Eisenstein series, we can again Fourier expand the Maass form, 
$$f(iy)= \sum_{n \neq 0} \rho_\phi(n)  K_{it_\phi} (2\pi \abs{n} y),$$
with $\rho_\phi(n)=\rho_\phi(1)\lambda_\phi(n)$,
and use the integral equation of the $K$-Bessel function to relate $J(f,\eta)$ to the $L$-function \eqref{L cusp}.
That is, we have that
\begin{align*}
    J(f,\eta)^2 &=  \left(\frac{1}{b-a}\int_a^b \abs{ \int_0^\eta \left[\rho_\phi(1)\sum_{n=1}^\infty \lambda_\phi(n) K_{it}(2\pi n (y+v)) e(nx) \right] \rd v} \rd y\right)^2 \\
    &\ll \int_a^b  \int_{(1/2)} \abs{I(\eta,y; \nu) \gamma(\nu,t) L_{\phi}(\nu)}^2  \rd \nu \rd y
\end{align*}
where $\gamma(\nu,t) = \rho_\phi(1)  \Gamma(\frac{\nu+it}{2}) \Gamma(\frac{\nu- it}{2}) \pi^{-\nu} $ and $I(\eta,y; \nu) : = \int_0^\eta (y+v)^{-\nu} \rd v$. 
We have the bound  \eqref{I bound} for $I(\eta,y; \frac12+ir)$ as before and 
using the bound $\rho_\phi(1) \ll t_\phi^\vep e^{\frac{\pi t_\phi}{2}}$ \cite[(14)]{GhoshReznikovSarnak2013}, and Stirling's formula, we can similarly bound
$$
    \gamma(\frac12+ir,t) \ll  t_\phi^\vep \frac{ e^{-\pi |t_\phi+r|/4} e^{-\pi \abs{r-t_\phi}/4}e^{\pi t/2} }{ ((1+\abs{r-t_\phi})(1+|r+t^\phi|))^{1/4}}.$$
    
    We can again reduce the inner integral to the range $0<r<\infty$ and split it into three ranges 
   $$ \int_0^\infty \abs{I(\eta,y; \frac12+ir) \gamma(\frac12+ir,t) L_{\phi}(\frac12+ir)}^2  \rd r= \cI_0^{1/\eta}+\cI_{1/\eta}^{t_\phi}+\cI_{t_\phi}^\infty.$$
 For the last  the range $r\geq t_\phi$, we can use the convexity bound  $|L_\phi(\frac12+ir)|\ll (1+r+t_\phi)^{1/4+\vep}$ (see, e.g., \cite{IwaniecSarnak2000}) to estimate 
  \begin{align*}
 \cI_{t_\phi}^\infty
 & \ll t_\phi^{\vep-2}  \int_{t_\phi}^\infty  \frac{ e^{-\pi (r-t_\phi)} (1+r+t_\phi)^{1/2+\vep}  }{ ((1+\abs{r-t_\phi})(1+|r+t^\phi|))^{1/2} }  \rd r \\
  & \ll t_\phi^{\vep-2}  \int_{0}^\infty  \frac{ e^{-\pi r} (1+r+2t_\phi)^{1/2+\vep}  }{(1+ r)(r+2t^\phi|))^{1/2} }  \rd r\ll  t_\phi^{\vep-2}.
   \end{align*}

 In the first range when $r\leq 1/\eta$, we have that
  \begin{align*}
 \cI_0^{1/\eta}
 & \ll t_\phi^{\vep-1} \eta^2 \int_{0}^{1/\eta}  |L_{\phi}(\frac12+ir)|^2   \rd r \\
 &\ll  \eta^2  \frac{t_\phi^\vep}{\eta t_\phi}. \end{align*}
 
 Finally, for $1/\eta<t<t_\phi$, split to dyadic intervals and apply Conjecture \ref{con:L2}:
  \begin{align*}
 \cI_{1/\eta}^{t_\phi} 
 & \ll t_\phi^{\vep}  \int_{1/\eta}^{t_\phi} \frac{| L_{\phi}(\frac12+ir)|^2}{((r+t_\phi)(t_\phi-r) )^{1/2} r^2}  \rd r \\
 &\ll  t_\phi^{\vep} \sum_{k=\log(1/\eta)}^{\log(t_\phi)}  \frac{1}{((2^k+t_\phi)(t_\phi-2^k) )^{1/2} 2^{2k}}  \int_{2^{k-1}}^{2^k}| L_{\phi}(\frac12+ir)|^2\rd r\\
 &\ll  t_\phi^{2\vep} \sum_{k=\log(1/\eta)}^{\log(t_\phi)}  \frac{1}{((2^k+t_\phi)(t_\phi-2^k) )^{1/2} 2^{k}}\\
 &\ll  t_\phi^{2\vep}\int_{\log(1/\eta)}^{\log(t_\phi)}  \frac{1}{((2^u+t_\phi)(t_\phi-2^u) )^{1/2} 2^{u}}du\\
 &\ll  t_\phi^{2\vep}\int_{1/\eta}^{t_\phi}  \frac{1}{((v+t_\phi)(t_\phi-v) )^{1/2} v^2}dv\\
 &\ll t_\phi^{2\vep-2}\int_{1/{\eta t}}^{1}  \frac{1}{ ((1+v)(1-v) )^{1/2} v^2 }dv\ll  t_\phi^{2\vep-2}.
 \end{align*}
 
Integrating over $(a,b)$ we see that 
$$J(f,\eta)^2\ll \eta^2 \frac{t_\phi^\vep}{\eta t_\phi}+t_\phi^{\vep -2},$$ 
and taking square roots concludes the proof.
\end{proof}

\begin{proof}
    Let $\phi_j$ be a sequence of cusp and let $f_j(z)=y^{-1/2}\phi_j(z)$ and let $t_j=t_{\phi_j}$. Assume that we have we have the $L^p$ bounds
$M_p(f_{t_j})\ll_\epsilon t_j^\epsilon$ and that Conjecture \ref{con:L2} holds for these cusp forms. As before combining $L^p$ interpolation 
with the lower bound $M_2(f_t)\gg 1$ implies that there is a constant $C_1=C_1(\epsilon)$ so that
$$M_1(f_{t_j})\geq C_1 M_2(f_{t_j}) t_j^{-\frac{\epsilon p}{p-2}}.$$
Let $c=C_1t_j^{-\frac{ p\epsilon }{p-2}}$ and $\eta=\frac{1}{N}=t_j^{\delta-1}$,
so that $c^3\eta M_2(f_{t_j})\geq C_2 \eta t_j^{-\frac{3 p\epsilon}{p-2}}$. 
From the upper bound
$$J(f_{t_j},\eta)\ll  \eta  \frac{t_j^\vep}{\sqrt{\eta t_j}},$$
we see that $J(f_{t_j},\eta)\leq \frac{c^3}{16}\eta M_2(f_{t_j})$ as long as $\delta>\frac{(7 p-2)\epsilon}{p-2}$ in which case Theorem \ref{thm:Littlewood} implies that
$$K^\beta(f_{t_j})\geq C_3 t_j^{1-\delta-\frac{2p \epsilon }{p-2}}$$
for an appropriate constant $C_3>0$. 
In particular, we see that for any $\kappa>9$, for all sufficiently large $t_j$ we have that 
$$K^\beta(f_{t_j})\geq  t_j^{1-\frac{\kappa\epsilon p }{p-2}},$$
from which the claim follows.
\end{proof}


\bibliographystyle{alpha}

\bibliography{DKbibliog}
\end{document}